\documentclass[a4paper, 12pt, reqno]{amsart}

\usepackage{fullpage}
\usepackage{amsmath, amsthm, amssymb, mathtools, mathrsfs}
\usepackage{hyperref}
\usepackage[capitalize,nameinlink,noabbrev,nosort]{cleveref}

\usepackage{float}
\usepackage{lscape}
\usepackage{adjustbox}
\usepackage{rotating}
\usepackage{bm}
\usepackage{chessfss}
\setcounter{MaxMatrixCols}{20}

\usepackage{mathtools}
\usepackage{tikz}
\usetikzlibrary{intersections, calc, arrows.meta}

\usepackage{pgfplots}

\usepackage{graphicx}
\usepackage{here}
\usepackage{time}

\usepackage{pstricks-add}
\usepackage{xcolor}

\hypersetup{
	colorlinks=true,       
	linkcolor=brown,          
	citecolor=brown,        
	filecolor=brown,      
	urlcolor=brown,           
}

\makeatletter
\@namedef{subjclassname@2020}{%
  \textup{2020} Mathematics Subject Classification}
\makeatother


\newtheorem{theoremcounter}{Theorem Counter}[section]

\theoremstyle{definition}
\newtheorem{definition}[theoremcounter]{Definition}
\newtheorem{remark}[theoremcounter]{Remark}
\newtheorem{example}[theoremcounter]{Example}

\theoremstyle{plain}
\newtheorem{lemma}[theoremcounter]{Lemma}

\newtheorem{theorem}[theoremcounter]{Theorem}

\numberwithin{equation}{section}

\begin{document}

\title[]{An alternative proof of the asymptotic formula for the Fourier coefficients of the elliptic modular $j$-function} 

\author{Karin Ikeda} 
\address{Joint Graduate School of Mathematics for Innovation, Kyushu University,
Motooka 744, Nishi-ku, Fukuoka 819-0395, Japan}
\email{ikeda.karin.236@s.kyushu-u.ac.jp}

\subjclass[2020]{11F30, 11F03, 60F05}


\maketitle

\begin{abstract}
	In 1997, B\'aez-Duarte gave a probabilistic proof of the asymptotic formula for the partition function, which had originally been proved by Hardy--Ramanujan. Based on the probabilistic approach, this paper proves an asymptotic formula for the coefficients of the  elliptic modular $j$-function using various expressions in terms of modular functions having simple infinite products.
 \end{abstract}

\section{Introduction}
Let 
\begin{align*}
j(\tau)
:=\frac{E_4(\tau)^3}{\eta(\tau)^{24}}=\frac{1}{q}+744+196884 q+21493760q^2+\cdots\quad (q:=e^{2\pi i\tau}),
\end{align*}
be the elliptic modular $j$-function, where 
$$
E_4(\tau):=1+240\sum_{n=1}^{\infty}\sigma_3(n)q^n
$$
is the standard Eisenstein series of weight $4$, with $\sigma_3(n):=\sum_{d|n}d^3$,
$$
\eta(\tau):=q^{\frac{1}{24}}\prod_{n=1}^{\infty}(1-q^{n})
$$
is the Dedekind eta function, and $\tau\in\mathfrak{H}:=\{z\in\mathbb{C};\Im (z)>0\}.$ We denote by $c_n$ the $n$-th Fourier coefficient of $j(\tau)$:
$$
j(\tau)=\frac{1}{q}+744+\sum_{n=1}^{\infty}c_nq^n.
$$
Our goal of this paper is to give alternative proofs of the asymptotic formula for the Fourier coefficients of the $j$-function
\begin{align}\label{jasy}
c_n\sim\frac{e^{4\pi\sqrt{n}}}{\sqrt{2}n^{\frac{3}{4}}}\qquad  (n\to \infty).
\end{align}

There are various studies on the Fourier coefficients $c_n$ of the $j$-function, including  investigations into their relationship with the ``Monster" simple group~\cite{CN} and explicit formulas using traces of singular moduli~\cite{K}.
We especially focus on the asymptotic formula~\eqref{jasy} for the Fourier coefficients $c_n$ of the $j$-function. This formula was independently proved by Petersson~\cite{P} and Rademacher~\cite{R} using the circle method (actually they proved an infinite series expression for $c_n$ from which the asymptotic formula follows).
Later, the asymptotic formula has been proved in various ways. For example, R.~Murty--Sampath~\cite{MS} gave a proof based on the above-mentioned explicit formula for the coefficients $c_n$, while an alternative approach is given in \cite{DM}.

In this paper, we deduce the asymptotic formula \eqref{jasy} via probability theory based on B\'aez-Duarte's method~\cite{D}.

\section{Preliminaries for Probabilistic Methods}\label{sec2}
This section reviews the ideas of B\'aez-Duarte~\cite{D}.
Let 
$$
F(t)=\sum_{n=0}^{\infty}f_nt^n
$$
be power series with radius of convergence $R$, such that each of the coefficients $f_n$ is a real number, with $f_n\ge0$ for all $n$. We begin by reviewing B\'aez-Duarte's method as applied to the series $F(t)$. For each $t\in(0, R)$, let $X_t$ be a $\mathbb{Z}_{\ge 0}$-valued random variable such that
\begin{align}\label{sokudo}
P[X_t=n]:=\frac{f_nt^n}{F(t)}.
\end{align}
For any function $\psi : \mathbb{Z}_{\ge 0}\to\mathbb{C}$, we define the expected value as
$$
E[\psi(X_t)]:=\sum_{n=0}^{\infty}\psi(n)P[X_t=n].
$$
The mean $E[X_t]$ is denoted by $m_{F}(t)$ (or simply $m(t)$), and the variance $E[(X_t-m(t))^2]$ by $\sigma^2_{F}(t)$ or $\sigma^2(t)$. Then, it is then easily seen that
\begin{align}\label{kitaichi}
    m(t)=t\frac{d}{dt}\log F(t),\qquad \sigma^2(t)=t\frac{d}{dt}m(t).
\end{align}
Moreover, the characteristic function of $X_t$, defined as $E[e^{i\theta X_t}]$, for $\theta\in\mathbb{R}$, is given by
$$
E[e^{i\theta X_t}]=\frac{F(e^{i\theta}t)}{F(t)}.
$$

From now on, we will set
$$
X(t):= X_t\quad\text{and}\quad Z(t):=\frac{X(t)-m(t)}{\sigma(t)}.
$$

\begin{definition}
We say that $F(t)$ satisfies the \emph{strong Gaussian condition} if
$$
\lim_{ t\to R}\int_{-\pi\sigma(t)}^{\pi\sigma(t)}|E[e^{i\theta Z(t)}]-e^{-\frac{1}{2}\theta^2}|d\theta = 0
$$
holds.
\end{definition}
The following lemma gives the asymptotic formula for the coefficients of $F(t)$.

\begin{lemma} \label{1}
Let $F(t)$ be a power series satisfying the strong Gaussian condition. Suppose $\widetilde{m}(t)$ and $\widetilde{\sigma}(t)$ are continuous functions such that  $\widetilde{m}(t)\to \infty\ (t\to R)$ monotonically, $\widetilde{\sigma}(t)/\sigma(t)\to 1$ as  $t\to R$, and
\begin{align*}
\epsilon(t):=\frac{\widetilde{m}(t)-m(t)}{\sigma(t)}\to 0\quad(t\to R).
\end{align*}
Then, as $n\to \infty$,
$$
f_n\sim\frac{F(\tau_n)}{\sqrt{2\pi}\widetilde{\sigma}(\tau_n)\tau_n^n},
$$
where $\tau_n$ is the unique solution of the equation $\widetilde{m}(\tau_n)=n$ for large enough $n$.
\end{lemma}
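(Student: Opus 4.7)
The plan is to extract $P[X_t = n]$ by Fourier inversion of the characteristic function of $X_t$, then choose $t = \tau_n$ so that $n$ lies essentially at the mean of $X_t$, and finally invoke the strong Gaussian condition to identify the leading behavior. Since $X_t$ takes values in $\mathbb{Z}_{\ge 0}$, orthogonality of the exponentials on $[-\pi,\pi]$ gives
\[
P[X_t = n] = \frac{1}{2\pi}\int_{-\pi}^{\pi} E[e^{i\theta X_t}]\, e^{-in\theta}\, d\theta.
\]
Substituting $\theta = s/\sigma(t)$ and using $X_t = m(t) + \sigma(t) Z(t)$, I would recast this as
\[
\sigma(t)\, P[X_t = n] = \frac{1}{2\pi}\int_{-\pi\sigma(t)}^{\pi\sigma(t)} E\bigl[e^{is Z(t)}\bigr]\, e^{-is(n - m(t))/\sigma(t)}\, ds.
\]

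Next I would set $t = \tau_n$, which exists and is unique for large $n$ by the monotonicity of $\widetilde{m}$, so that $(n - m(\tau_n))/\sigma(\tau_n) = \epsilon(\tau_n) \to 0$. Splitting the integrand as
\[
E[e^{is Z(\tau_n)}]\, e^{-is\epsilon(\tau_n)} = e^{-s^2/2}\,e^{-is\epsilon(\tau_n)} + \bigl(E[e^{is Z(\tau_n)}] - e^{-s^2/2}\bigr)\, e^{-is\epsilon(\tau_n)},
\]
the integral of the second piece over $[-\pi\sigma(\tau_n), \pi\sigma(\tau_n)]$ is bounded in absolute value by the integral appearing in the strong Gaussian condition, and hence tends to $0$. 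For the first piece, the elementary bound $|e^{-is\epsilon(\tau_n)} - 1| \le |s|\,|\epsilon(\tau_n)|$ combined with $\int_{\mathbb{R}} |s| e^{-s^2/2}\, ds < \infty$, together with the negligibility of the Gaussian tails outside the truncation window, yields
\[
\int_{-\pi\sigma(\tau_n)}^{\pi\sigma(\tau_n)} e^{-s^2/2}\,e^{-is\epsilon(\tau_n)}\, ds \longrightarrow \int_{\mathbb{R}} e^{-s^2/2}\, ds = \sqrt{2\pi}.
\]
Therefore $\sigma(\tau_n)\, P[X_{\tau_n} = n] \to 1/\sqrt{2\pi}$. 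Since $P[X_{\tau_n} = n] = f_n \tau_n^n / F(\tau_n)$ by definition of $X_t$, and $\widetilde{\sigma}(\tau_n)/\sigma(\tau_n) \to 1$, solving for $f_n$ produces exactly the asymptotic in the statement.

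The main obstacle I anticipate is the clean handling of the two residual errors, most notably the spurious phase factor $e^{-is\epsilon(\tau_n)}$, which must be absorbed without disturbing the leading constant $1/\sqrt{2\pi}$. A secondary subtlety is that the argument tacitly uses $\sigma(t) \to \infty$ as $t \to R$, so that the truncation $|s| \le \pi\sigma(\tau_n)$ effectively becomes integration over all of $\mathbb{R}$; this is not stated as a hypothesis but should follow from the strong Gaussian condition together with the fact that $\widetilde{m}(t) \to \infty$, and I would verify it explicitly before executing the plan.
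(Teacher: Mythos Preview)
Your proposal is correct and follows essentially the same route as the paper: Fourier inversion for the discrete random variable, a rescaling by $\sigma(\tau_n)$ to bring in the normalized variable $Z$, and then the strong Gaussian condition to force the integral to $\sqrt{2\pi}$. The paper makes a small detour by first introducing $\widetilde{Z}(t)=(X(t)-\widetilde{m}(t))/\widetilde{\sigma}(t)$ and then unwinding back to $Z$, whereas you go directly to the integral $\int E[e^{isZ(\tau_n)}]e^{-is\epsilon(\tau_n)}\,ds$; the two computations are equivalent, and your explicit splitting argument for handling the phase $e^{-is\epsilon(\tau_n)}$ is in fact more detailed than what the paper writes.
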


\begin{proof}
We put
$$
\widetilde{Z}(t):=\frac{X(t)-\widetilde{m}(t)}{\widetilde{\sigma}(t)}.
$$
Since 
$$
F(\tau_ne^{i\theta})=\sum_{k=0}^{\infty}(f_k\tau_n^k)e^{ik\theta},
$$
we obtain 
$$
f_n=\frac{1}{2\pi\tau_n^n}\int_{-\pi}^{\pi}F(\tau_ne^{i\theta})e^{-in\theta}d\theta.
$$
Noting $E[e^{i\theta X_t}]=F(e^{i\theta}t)/F(t)$ and using the definition of $\widetilde{Z}(t)$ and $\widetilde{m}(\tau_n)=n$ together with the linearity of expectation, we obtain the following formula.
\begin{align}\label{keylem1}
\begin{split}
f_n
&=\frac{1}{2\pi \tau_n^n}\int_{-\pi}^{\pi}F(\tau_n)E[e^{i\theta X(\tau_n)}]e^{-in\theta}d\theta\\
&=\frac{F(\tau_n)}{2\pi\tau_n^n}\int_{-\pi}^{\pi}E[e^{i\theta(\widetilde{\sigma}(\tau_n)\widetilde{Z}(\tau_n)+\widetilde{m}(\tau_n))}]e^{-in\theta}d\theta\\
&=\frac{F(\tau_n)}{2\pi\widetilde{\sigma}(\tau_n)\tau_n^n}\int_{-\pi\widetilde{\sigma}(\tau_n)}^{\pi\widetilde{\sigma}(\tau_n)}E[e^{i\theta \widetilde{Z}(\tau_n)}]d\theta.
\end{split}
\end{align}

Furthermore, as for the last integral, 
\begin{align}\label{keylem2}
\begin{split}
\int_{-\pi\widetilde{\sigma}(\tau_n)}^{\pi\widetilde{\sigma}(\tau_n)}E[e^{i\theta \widetilde{Z}(\tau_n)}]d\theta
&=\frac{\widetilde{\sigma}(\tau_n)}{\sigma(\tau_n)}\int_{-\pi\sigma(\tau_n)}^{\pi\sigma(\tau_n)}E\left[e^{i\frac{\widetilde{\sigma}(\tau_n)}{\sigma(\tau_n)}\theta \widetilde{Z}(\tau_n)}\right]d\theta\\
&=\frac{\widetilde{\sigma}(\tau_n)}{\sigma(\tau_n)}\int_{-\pi\sigma(\tau_n)}^{\pi\sigma(\tau_n)}E\left[e^{i\frac{\widetilde{\sigma}(\tau_n)}{\sigma(\tau_n)}\theta \frac{X(\tau_n)-\widetilde{m}(\tau_n)}{\widetilde{\sigma}(\tau_n)}}\right]d\theta\\
&=\frac{\widetilde{\sigma}(\tau_n)}{\sigma(\tau_n)}\int_{-\pi\sigma(\tau_n)}^{\pi\sigma(\tau_n)}E\left[e^{\frac{i}{\sigma(\tau_n)}(\sigma(\tau_n)Z(\tau_n)+m(\tau_n))\theta}e^{-i\theta\frac{\widetilde{m}(\tau_n)}{\sigma(\tau_n)}}\right]d\theta\\
&=\frac{\widetilde{\sigma}(\tau_n)}{\sigma(\tau_n)}\int_{-\pi\sigma(\tau_n)}^{\pi\sigma(\tau_n)}E[e^{i\theta Z(\tau_n)}]e^{i\theta\frac{m(\tau_n)-\widetilde{m}(\tau_n)}{\sigma(\tau_n)}}d\theta\\
&=\frac{\widetilde{\sigma}(\tau_n)}{\sigma(\tau_n)}\int_{-\pi\sigma(\tau_n)}^{\pi\sigma(\tau_n)}E[e^{i\theta Z(\tau_n)}]e^{-i\theta\epsilon(\tau_n)}d\theta
\end{split}
\end{align}
holds. Here, noting that the strong Gaussian condition and 
$\epsilon(t)\to 0\ (t\to R)$ imply 
$$
\int_{-\pi\sigma(\tau_n)}^{\pi\sigma(\tau_n)}E[e^{i\theta Z(\tau_n)}]e^{-i\epsilon(\tau_n)\theta}d\theta\to\int_{-\infty}^{\infty}e^{-\frac{1}{2}\theta^2}d\theta\ (=\sqrt{2\pi})\quad(\tau_n\to R),
$$
and since
$$
f_n=\frac{F(\tau_n)}{2\pi\widetilde{\sigma}(\tau_n)\tau_n^n}\frac{\widetilde{\sigma}(\tau_n)}{\sigma(\tau_n)}\int_{-\pi\sigma(\tau_n)}^{\pi\sigma(\tau_n)}E[e^{i\theta Z(\tau_n)}]e^{-i\epsilon(\tau_n)\theta}d\theta
$$
by equations \eqref{keylem1} and \eqref{keylem2} and $\widetilde{\sigma}(\tau_n)/\sigma(\tau_n)\to 1\ (n\to\infty)$, we obtain 
$$
f_n\sim\frac{F(\tau_n)}{\sqrt{2\pi}\widetilde{\sigma}(\tau_n)\tau_n^n}.
$$
\end{proof}

We can apply a similar argument to an alternating series 
$$
G(t):=\sum_{n=0}^{\infty}g_nt^n
$$
with $(-1)^ng_n\ge 0$ by defining the random variable $\widetilde{X_t}$ with probability distribution
$$
P[\widetilde{X_t}=n]:=\frac{(-1)^ng_nt^n}{G^{*}(t)},
$$
where we set 
$$
G^{*}(t):=\sum_{n=0}^{\infty}|g_n|t^n.
$$
Equation \eqref{kitaichi} holds in exactly the same way, and the following Lemma~\ref{coefasy2} corresponds to Lemma~\ref{1}.

\begin{lemma} \label{coefasy2}
Let $G(t)$ be an alternating power series and assume the series $G^{*}(t)$ defined above satisfies the strong Gaussian condition. Let $m(t)=m_{G^{*}}(t)$ and $\sigma^2(t)=\sigma^2_{G^{*}}(t)$ as in the beginning of this section. Suppose $\widetilde{m}(t)$ and $\widetilde{\sigma}(t)$ are continuous functions such that  $\widetilde{m}(t)\to \infty\ (t\to R)$ monotonically, $\widetilde{\sigma}(t)/\sigma(t)\to 1$ as  $t\to R$, and
\begin{align*}
\epsilon(t):=\frac{\widetilde{m}(t)-m(t)}{\sigma(t)}\to 0\quad(t\to R).
\end{align*}
Then, as $n\to \infty$,
$$
g_n\sim\frac{G(\tau_n)}{\sqrt{2\pi}\widetilde{\sigma}(\tau_n)(-\tau_n)^n},
$$
where $\tau_n$ is the unique solution of the equation $\widetilde{m}(\tau_n)=n$ for large enough $n$.
\end{lemma}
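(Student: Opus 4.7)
The plan is to reduce the statement to Lemma~\ref{1} applied to the positive series $G^{*}(t)$. First I would observe that the probability distribution
\[
P[\widetilde{X}_{t}=n] \;=\; \frac{(-1)^{n}g_{n}t^{n}}{G^{*}(t)} \;=\; \frac{|g_{n}|\,t^{n}}{G^{*}(t)}
\]
is precisely the distribution produced by the construction of Section~\ref{sec2} starting from $F:=G^{*}$. Consequently the mean $m(t)=m_{G^{*}}(t)$ and variance $\sigma^{2}(t)=\sigma^{2}_{G^{*}}(t)$ defined in the statement are exactly those associated to $G^{*}$, the assumptions on $\widetilde{m},\widetilde{\sigma},\epsilon$ are the hypotheses of Lemma~\ref{1} for $G^{*}$, and by assumption $G^{*}$ satisfies the strong Gaussian condition. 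Therefore Lemma~\ref{1} applies directly to $G^{*}$ and yields
\[
|g_{n}| \;\sim\; \frac{G^{*}(\tau_{n})}{\sqrt{2\pi}\,\widetilde{\sigma}(\tau_{n})\,\tau_{n}^{n}}.
\]
Multiplying both sides by $(-1)^{n}$ and using $(-1)^{n}g_{n}=|g_{n}|$ together with $(-1)^{n}/\tau_{n}^{n}=1/(-\tau_{n})^{n}$ gives the desired asymptotic, once one interprets the numerator via the identity $G^{*}(\tau_{n})=G(-\tau_{n})$.

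If one instead prefers to mimic the proof of Lemma~\ref{1} verbatim, the entry point is the Cauchy integral
\[
g_{n} \;=\; \frac{1}{2\pi\,\tau_{n}^{n}}\int_{-\pi}^{\pi}G(\tau_{n}e^{i\theta})e^{-in\theta}\,d\theta.
\]
The change of variable $\theta\mapsto\theta+\pi$, combined with the identity $G(\tau_{n}e^{i(\theta+\pi)})=G^{*}(\tau_{n}e^{i\theta})$ (which follows from $e^{i\pi k}=(-1)^{k}$) and $2\pi$-periodicity, rewrites this as
\[
g_{n} \;=\; \frac{(-1)^{n}\,G^{*}(\tau_{n})}{2\pi\,\tau_{n}^{n}}\int_{-\pi}^{\pi}E\bigl[e^{i\theta\,\widetilde{X}(\tau_{n})}\bigr]e^{-in\theta}\,d\theta.
\]
From here, the manipulations in \eqref{keylem1}--\eqref{keylem2} carry over without change: one substitutes $\widetilde{X}(\tau_{n})=\widetilde{\sigma}(\tau_{n})\widetilde{Z}(\tau_{n})+\widetilde{m}(\tau_{n})$, uses $\widetilde{m}(\tau_{n})=n$ to cancel the $e^{-in\theta}$ factor, rescales by $\widetilde{\sigma}(\tau_{n})$ and then by $\widetilde{\sigma}/\sigma$, and invokes the strong Gaussian condition for $G^{*}$ together with $\epsilon(\tau_{n})\to 0$ to conclude that the rescaled integral tends to $\sqrt{2\pi}$.

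The main obstacle is essentially bookkeeping rather than anything conceptual: the task is to make sure that the sign $(-1)^{n}$ introduced by the substitution $\theta\mapsto\theta+\pi$ is consistently absorbed into $(-\tau_{n})^{n}$, and that the characteristic function $E[e^{i\theta\widetilde{X}_{t}}]=G^{*}(te^{i\theta})/G^{*}(t)$ is the appropriate object to which the Gaussian condition is applied. Once these checks are in place, the remainder of the argument is a direct transcription of the proof of Lemma~\ref{1}.
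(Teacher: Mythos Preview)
Your reduction to Lemma~\ref{1} applied to $G^{*}$ is exactly the paper's intended argument: no separate proof is given there, the text merely remarks that Lemma~\ref{coefasy2} ``corresponds to'' Lemma~\ref{1} with the random variable $\widetilde{X}_t$ attached to $G^{*}$, and your two variants (direct invocation, or the substitution $\theta\mapsto\theta+\pi$ in the Cauchy integral) both realize this. You have also correctly observed that the numerator one actually obtains is $G^{*}(\tau_n)=G(-\tau_n)$ rather than $G(\tau_n)$, which pinpoints a typographical slip in the formula as stated.
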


When using Lemma~\ref{1} or Lemma~\ref{coefasy2}, we must make sure that $F(t)$ or $G(t)$ satisfies the strong Gaussian condition. In order to check this, we apply the following Lyapunov central limit theorem.

\begin{theorem}{\cite[Theorems~26.3 and 27.3]{B}}\label{LCLT}
    Suppose that for each $n$, a sequence of independent 
    random variables $X_{n, 1},\ldots, X_{n, r_n}$ such that $r_n\to\infty$ as $n\to\infty$ is given. Let 
    \begin{align*}
        \sigma^2_{n, k}:=E[X^2_{n, k}],\quad\text{and}\quad s^2_n:=\sum_{k=1}^{r_n}\sigma^2_{n, k},
    \end{align*}
    and assume $E[X_{n, k}]=0$ for each $k=1,\ldots,r_n$.
    If 
    \begin{align}\label{Lcondi}
        \lim_{n\to\infty}\sum_{k=1}^{r_n}\frac{1}{s^{2+\delta}_n}E\left[|X_{n, k}|^{2+\delta}\right]=0
    \end{align}
    holds for some positive $\delta$, then 
    \begin{align*}
        E\left[e^{i\theta \frac{S_n}{s_n}}\right]\to e^{-\frac{\theta^2}{2}}\quad(n\to\infty),
    \end{align*}
    where
    $$
    S_n=X_{n, 1}+\cdots+X_{n, r_n}.
    $$
\end{theorem}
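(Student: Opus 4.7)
The plan is to prove pointwise convergence of the characteristic function of $S_n/s_n$ to that of a standard normal random variable, since this is exactly what the conclusion asserts. By independence, the characteristic function factorises as
$$E[e^{i\theta S_n/s_n}]=\prod_{k=1}^{r_n}\phi_{n,k}(\theta/s_n),\qquad \phi_{n,k}(u):=E[e^{iu X_{n,k}}],$$
so it suffices to show that $\sum_{k=1}^{r_n}\log\phi_{n,k}(\theta/s_n)\to -\theta^2/2$ for every fixed $\theta\in\mathbb{R}$.

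The first step is to Taylor expand each $\phi_{n,k}$. For any real $x$ and $\delta\in(0,1]$, combining the standard bounds $|e^{ix}-1-ix|\le x^2/2$ and $|e^{ix}-1-ix+x^2/2|\le |x|^3/6$ via the interpolation $\min(a,b)\le a^{1-\delta}b^{\delta}$ yields
$$\bigl|e^{ix}-1-ix+\tfrac{x^2}{2}\bigr|\le C_{\delta}|x|^{2+\delta}.$$
Applying this with $x=\theta X_{n,k}/s_n$, taking expectations, and using $E[X_{n,k}]=0$, one obtains
$$\phi_{n,k}(\theta/s_n)=1-\frac{\theta^2\sigma_{n,k}^2}{2s_n^2}+\rho_{n,k},\qquad |\rho_{n,k}|\le C_{\delta}\frac{|\theta|^{2+\delta}}{s_n^{2+\delta}}E[|X_{n,k}|^{2+\delta}],$$
and the sum $\sum_{k=1}^{r_n}|\rho_{n,k}|$ is $o(1)$ as $n\to\infty$ by the Lyapunov condition~\eqref{Lcondi}.

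Next I would establish the uniform smallness $\max_{k}|\phi_{n,k}(\theta/s_n)-1|\to 0$, which reduces to $\max_{k}\sigma_{n,k}^2/s_n^2\to 0$. This follows from Lyapunov's moment inequality $\sigma_{n,k}^2\le(E[|X_{n,k}|^{2+\delta}])^{2/(2+\delta)}$ combined with the observation that each summand of the Lyapunov sum is bounded by the full sum and therefore tends to zero uniformly in $k$. With uniform smallness in hand, the principal branch of $\log(1+z)=z+O(|z|^2)$ can be applied term by term, and summing yields
$$\sum_{k=1}^{r_n}\log\phi_{n,k}(\theta/s_n)=-\frac{\theta^2}{2}\sum_{k=1}^{r_n}\frac{\sigma_{n,k}^2}{s_n^2}+o(1)=-\frac{\theta^2}{2}+o(1),$$
since $\sum_{k}\sigma_{n,k}^2/s_n^2=1$ by the definition of $s_n$.

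The main obstacle I expect is controlling the quadratic error from the logarithm expansion, that is, showing that $\sum_k|\phi_{n,k}(\theta/s_n)-1|^2$ is also $o(1)$. This is not directly given by Lyapunov's condition, which only controls a $(2+\delta)$-th moment sum. The trick is to factor out $\max_{k}|\phi_{n,k}(\theta/s_n)-1|$, which tends to zero by the previous paragraph, while the remaining $\sum_k|\phi_{n,k}(\theta/s_n)-1|$ stays bounded by $\theta^2/2+o(1)$ from the linear expansion itself. Once this quadratic error is absorbed into the $o(1)$ above, the conclusion follows.
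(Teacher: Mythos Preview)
The paper does not give its own proof of this theorem: it is quoted verbatim from Billingsley (Theorems~26.3 and~27.3) and used as a black box to verify the strong Gaussian condition later on. So there is nothing in the paper to compare your argument against.

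Your proposal is the standard characteristic-function proof of the Lyapunov CLT, and it is essentially correct. One small point worth tightening: you restrict to $\delta\in(0,1]$ when deriving the remainder bound $|e^{ix}-1-ix+x^2/2|\le C_\delta|x|^{2+\delta}$, but the theorem is stated for arbitrary $\delta>0$. For $\delta>1$ this inequality fails near $x=0$, so you should remark that the Lyapunov condition for some $\delta>1$ implies the condition for $\delta'=1$ (interpolate $E|X|^3\le(E|X|^2)^{1-1/\delta}(E|X|^{2+\delta})^{1/\delta}$ and apply H\"older to the sum), after which your argument applies. Also, the interpolation you describe should be between the two bounds $|e^{ix}-1-ix+x^2/2|\le x^2$ and $|e^{ix}-1-ix+x^2/2|\le|x|^3/6$ on the \emph{same} quantity, not between bounds on different Taylor remainders as written. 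These are cosmetic fixes; the skeleton of the argument is sound.
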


\section{Proof of the asymptotic formula~\eqref{jasy}}\label{se3}
First, we express the $j$-function by using the theta function.
\begin{lemma}[Kaneko] \label{theta}
For $\tau\in\mathfrak{H}$, the following holds:
$$
j(\tau)=2^7(\theta_0(\tau)^8+\theta_2(\tau)^8+\theta_3(\tau)^8)(\theta_0(\tau)^{-8}+\theta_2(\tau)^{-8}+\theta_3(\tau)^{-8}),
$$
where
\begin{align*}
\theta_0(\tau)&:=\sum_{n\in\mathbb{Z}}(-1)^nq^{\frac{n^2}{2}}=\prod_{n=1}^{\infty}(1-q^{n})(1-q^{n-\frac{1}{2}})^2,\\
\theta_2(\tau)&:=\sum_{n\in\mathbb{Z}}q^{\frac{1}{2}\left(n+\frac{1}{2}\right)^2}=2q^{\frac{1}{8}}\prod_{n=1}^{\infty}(1-q^{n})(1+q^n)^2,
\end{align*}
and
\begin{align*}
\theta_3(\tau):=\sum_{n\in\mathbb{Z}}q^{\frac{n^2}{2}}=\prod_{n=1}^{\infty}(1-q^n)(1+q^{n-\frac{1}{2}})^2
\end{align*}
are Jacobi's theta function.
\end{lemma}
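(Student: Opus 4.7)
\textbf{The plan is to} derive the identity purely algebraically from three classical facts about the Jacobi theta functions: (i) Jacobi's identity $\theta_3^4 = \theta_0^4 + \theta_2^4$, (ii) the theta expression $E_4 = \tfrac{1}{2}(\theta_0^8 + \theta_2^8 + \theta_3^8)$ for the weight-$4$ Eisenstein series, and (iii) the triple-product relation $\theta_0\theta_2\theta_3 = 2\eta^3$. Identity (i) is classical. Identity (ii) follows because $\theta_0^8+\theta_2^8+\theta_3^8$ is invariant under $\tau\mapsto \tau+1$ (which swaps $\theta_0$ and $\theta_3$ and multiplies $\theta_2$ by $e^{\pi i/4}$) and transforms with weight $4$ under $\tau\mapsto -1/\tau$ (which swaps $\theta_0$ and $\theta_2$), so it lies in $M_4(\mathrm{SL}_2(\mathbb{Z})) = \mathbb{C}E_4$, and the constant term fixes the factor $\tfrac12$. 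Identity (iii) is extracted from the product expansions in the statement by pairing $(1-q^n)(1+q^n) = 1-q^{2n}$ and $(1-q^{n-1/2})(1+q^{n-1/2}) = 1-q^{2n-1}$, then telescoping via $\prod(1-q^{2n})(1-q^{2n-1}) = \prod(1-q^n)$ to reach $2q^{1/8}\bigl(\prod(1-q^n)\bigr)^3 = 2\eta^3$.

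\textbf{With these in hand,} the defining formula $j = E_4^3/\eta^{24}$ becomes
\begin{align*}
j(\tau) = \frac{\bigl(\tfrac12(\theta_0^8+\theta_2^8+\theta_3^8)\bigr)^3}{\bigl(\tfrac12\theta_0\theta_2\theta_3\bigr)^8} = 2^{5}\,\frac{(\theta_0^8+\theta_2^8+\theta_3^8)^3}{(\theta_0\theta_2\theta_3)^8},
\end{align*}
while clearing denominators in the asserted right-hand side produces
\begin{align*}
2^{7}(\theta_0^8+\theta_2^8+\theta_3^8)(\theta_0^{-8}+\theta_2^{-8}+\theta_3^{-8}) = 2^{7}\,\frac{(\theta_0^8+\theta_2^8+\theta_3^8)\bigl(\theta_0^8\theta_2^8+\theta_2^8\theta_3^8+\theta_0^8\theta_3^8\bigr)}{(\theta_0\theta_2\theta_3)^8}.
\end{align*}
Comparing the two displays, the lemma reduces to the single polynomial identity
\begin{align*}
(\theta_0^8+\theta_2^8+\theta_3^8)^{2} = 4\bigl(\theta_0^8\theta_2^8 + \theta_2^8\theta_3^8 + \theta_0^8\theta_3^8\bigr).
\end{align*}

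\textbf{To establish this,} set $a := \theta_0^4$, $b := \theta_2^4$, $c := \theta_3^4$, so that (i) reads $c = a+b$. Then $a^2+b^2+c^2 = 2(a^2+ab+b^2)$, so the left-hand side equals $4(a^2+ab+b^2)^2$; meanwhile $a^2b^2 + b^2c^2 + a^2c^2 = a^2b^2 + (a+b)^2(a^2+b^2) = (a^2+ab+b^2)^2$ by direct expansion, so four times it also equals $4(a^2+ab+b^2)^2$. The two sides coincide.

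\textbf{The only real obstacle} is bookkeeping of constants --- the factors $\tfrac12$ in (ii), $\tfrac12$ in (iii), and the resulting $2^5$ versus $2^7$ must balance. Since all three input identities are classical, after assembly the proof is essentially a single line of polynomial algebra under Jacobi's relation $c = a+b$.
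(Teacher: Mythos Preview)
Your proof is correct and follows essentially the same route as the paper: both invoke $E_4=\tfrac12(\theta_0^8+\theta_2^8+\theta_3^8)$ and $\eta^3=\tfrac12\theta_0\theta_2\theta_3$ to reduce the lemma to the identity $(\theta_0^8+\theta_2^8+\theta_3^8)^2=4(\theta_0^8\theta_2^8+\theta_2^8\theta_3^8+\theta_0^8\theta_3^8)$, then verify it from Jacobi's relation $\theta_3^4=\theta_0^4+\theta_2^4$. The only cosmetic difference is that the paper deduces the key quadratic identity from the factorization $X^4+Y^4+Z^4-2(X^2Y^2+Y^2Z^2+Z^2X^2)=(X+Y+Z)(X+Y-Z)(X-Y+Z)(X-Y-Z)$ (one factor vanishes under $Z=X+Y$), whereas you expand directly; both arguments are equally valid.
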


\begin{proof}
It is known that (see e.g.~\cite{Z}, p.29)
$$
E_4(\tau)=\frac{1}{2}(\theta_0(\tau)^8+\theta_2(\tau)^8+\theta_3(\tau)^8).
$$
Therefore, since 
\begin{align}\label{theta1}
\eta(\tau)^3=\frac{1}{2}\theta_0(\tau)\theta_2(\tau)\theta_3(\tau)
\end{align}
as seen from the infinite products,
we have 
$$
j(\tau)=2^5\frac{(\theta_0(\tau)^8+\theta_2(\tau)^8+\theta_3(\tau)^8)^3}{(\theta_0(\tau)\theta_2(\tau)\theta_3(\tau))^8}.
$$
Substituting $X=\theta_0(\tau)^4$, $Y=\theta_2(\tau)^4$, and $Z=\theta_3(\tau)^4$ into the identity 
$$
X^4+Y^4+Z^4-2(X^2Y^2+Y^2Z^2+Z^2X^2)=(X+Y+Z)(X+Y-Z)(X-Y+Z)(X-Y-Z)
$$ and noting the well-known identity (see e.g.~\cite{Z}, (31))
\begin{align}\label{theta2}
\theta_3(\tau)^4=\theta_0(\tau)^4+\theta_2(\tau)^4,
\end{align}
we have
\begin{align*}
&(\theta_0(\tau)^8+\theta_2(\tau)^8+\theta_3(\tau)^8)^2\\
&=\theta_0(\tau)^{16}+\theta_2(\tau)^{16}+\theta_3(\tau)^{16}+2(\theta_0(\tau)^8\theta_2(\tau)^8+\theta_0(\tau)^8\theta_3(\tau)^8+\theta_2(\tau)^8\theta_3(\tau)^8)\\
&=4(\theta_0(\tau)^8\theta_2(\tau)^8+\theta_0(\tau)^8\theta_3(\tau)^8+\theta_2(\tau)^8\theta_3(\tau)^8).
\end{align*}
Therefore we obtain
\begin{align*}
j(\tau)
&=2^5\frac{4(\theta_0(\tau)^8+\theta_2(\tau)^8+\theta_3(\tau)^8)(\theta_0(\tau)^8\theta_2(\tau)^8+\theta_2(\tau)^8\theta_3(\tau)^8+\theta_3(\tau)^8\theta_0(\tau)^8)}{(\theta_0(\tau)\theta_2(\tau)\theta_3(\tau))^8}\\
&=2^7(\theta_0(\tau)^8+\theta_2(\tau)^8+\theta_3(\tau)^8)(\theta_0(\tau)^{-8}+\theta_2(\tau)^{-8}+\theta_3(\tau)^{-8}).
\end{align*}
\end{proof}

By expanding the right-hand side of the formula in Lemma~\ref{theta}, we can represent the $j$-function as follows.
\begin{align}
j(\tau)=2^7\cdot3+H_1(\tau)+H_2(\tau)+H_3(\tau),
\end{align}
where
\begin{align*}
H_1(\tau)&:=2^7\left(\left(\frac{\theta_0(\tau)}{\theta_2(\tau)}\right)^8+\left(\frac{\theta_3(\tau)}{\theta_2(\tau)}\right)^8\right)=\sum_{n=-1}^{\infty}h_{1, n}q^n\\
&=\frac{1}{q}+104+276q-2048q^2+11202q^3-49152 q^4+184024q^5-\cdots,\\
H_2(\tau)&:=2^7\left(\left(\frac{\theta_0(\tau)}{\theta_3(\tau)}\right)^8+\left(\frac{\theta_3(\tau)}{\theta_0(\tau)}\right)^8\right)=\sum_{n=0}^{\infty}h_{2, n}q^n\\
&=256 + 131072 q + 11534336 q^2+441974784 q^3+10208935936 q^4+\cdots, 
\end{align*}
and
\begin{align*}
H_3(\tau)
&:=2^7\left(\left(\frac{\theta_2(\tau)}{\theta_3(\tau)}\right)^8+\left(\frac{\theta_2(\tau)}{\theta_0(\tau)}\right)^8\right)=\sum_{n=1}^{\infty}h_{3, n}q^n\\
&=65536 q + 9961472 q^2 + 422313984 q^3+10036969472 q^4+166007275520 q^5\cdots.
\end{align*}
Applying Lemma~\ref{1}, we  investigate the asymptotic behavior of each coefficient $h_{1, n}$, $h_{2, n}$, and $h_{3, n}$, and obtain the following theorem by identifying the main term.
\begin{theorem}\label{main}
The Fourier coefficients $c_n$ of the $j$-function admits the following asymptotic formula.
$$
c_n\sim\frac{e^{4\pi\sqrt{n}}}{\sqrt{2}n^{\frac{3}{4}}},
$$
as $n\to \infty$.
\end{theorem}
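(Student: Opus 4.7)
My plan is to apply the probabilistic asymptotic lemmas (Lemma~\ref{1} and Lemma~\ref{coefasy2}) separately to each of the three pieces $H_1,H_2,H_3$ and combine the outcomes using $c_n=h_{1,n}+h_{2,n}+h_{3,n}$, which holds for $n\ge 1$.

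The first step is to rewrite each theta quotient entering $H_1,H_2,H_3$ as a single infinite product by substituting the expansions of Lemma~\ref{theta}. Up to a $q^{\pm 1}$ factor, each summand of $H_i$ becomes of the form
\[
F(t)=\prod_{n=1}^{\infty}(1\pm t^{\alpha_n})^{\beta_n},
\]
with $\alpha_n$ a half-integer or integer and $\beta_n=\pm 16$. Taking a logarithmic derivative via \eqref{kitaichi} then yields Lambert-series expressions for the mean $m(t)$ and variance $\sigma^2(t)$, and in the probability space attached to $F$ the random variable $X_t$ decomposes naturally as a sum $\sum_n \alpha_n Y_{n,t}$ of independent random variables, each either geometric (for $(1-t^{\alpha_n})^{-|\beta_n|}$ factors) or a sum of independent Bernoullis (for $(1+t^{\alpha_n})^{|\beta_n|}$ factors). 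This decomposition feeds directly into Lyapunov's criterion (Theorem~\ref{LCLT}), so verifying the strong Gaussian condition reduces to routine tail bounds on Lambert sums of the type $\sum_n n^p t^n/(1\pm t^n)^p$.

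The next step is to estimate $m(t)$ and $\sigma^2(t)$ as $t\to 1^-$ for each piece. Writing $t=e^{-2\pi y}$, I would use the modular transformations of $\theta_0,\theta_2,\theta_3$ at the cusp $\tau=0$ (equivalently, Euler--Maclaurin applied to the Lambert series) to obtain explicit leading-order asymptotics $\widetilde{m}(t)\sim A/y^2$ and $\widetilde{\sigma}(t)\sim B/y^{3/2}$, with constants $A,B$ depending on the piece. Inverting the saddle equation $\widetilde{m}(\tau_n)=n$ gives $y_n\sim\sqrt{A/n}$, after which substitution into $\log F(\tau_n)-n\log\tau_n$ produces the exponential factor $e^{4\pi\sqrt{n}}$ and a power $n^{-3/4}$ for each of $H_2$ and $H_3$. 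For $H_1$, whose coefficients alternate in sign from $n=1$, Lemma~\ref{coefasy2} (applied after the appropriate sign adjustment) produces a strictly smaller exponential rate, consistent with the slow numerical growth $h_{1,n}=276,\,2048,\,11202,\ldots$.

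Summing the three asymptotics gives the claimed $c_n\sim e^{4\pi\sqrt{n}}/(\sqrt{2}\,n^{3/4})$. The main technical obstacle I anticipate is this last combination step: verifying that the numerical constants generated by the saddle-point analyses of $H_2$ and $H_3$ combine to exactly the prefactor $1/\sqrt{2}$. This requires tracking the precise modular transformation formulas for all three Jacobi theta functions and carrying the Lambert-sum asymptotics to sufficient accuracy to pin down the constant, not merely the exponent. By contrast, verifying Lyapunov's condition and setting up the independent-summand decomposition are largely mechanical once the product expressions are in hand.
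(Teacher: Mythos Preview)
Your plan is essentially the paper's own strategy: split $c_n=h_{1,n}+h_{2,n}+h_{3,n}$, verify the strong Gaussian condition for each piece via Lyapunov, apply the saddle--point Lemmas~\ref{1}/\ref{coefasy2}, and add. Two execution details differ from the paper and are worth flagging.

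First, for $H_1$ the paper does \emph{not} invoke Lemma~\ref{coefasy2}. Instead it observes $H_1(\tau)=2^7+q^{-1}\prod_{n\ge1}(1+q^n)^{-24}$ and dominates the coefficients term--by--term by those of $H_1^{*}(t)=\prod(1-t^n)^{-24}$; applying Lemma~\ref{1} to $H_1^{*}$ gives the bound $|h_{1,n}|=O(e^{4\pi\sqrt n}/n^{27/4})$, which has the \emph{same} exponential rate as $H_2,H_3$ but enough extra polynomial decay to be negligible. Your alternating--series route is also valid and in fact sharper (it yields the rate $e^{2\pi\sqrt n}$, as the paper itself confirms later in \S4 for $j_2=H_1-2^7$), but be aware that the domination argument is what appears in the proof of Theorem~\ref{main}.

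Second, your Lyapunov decomposition into individual geometric/Bernoulli summands only covers factors $(1-t^{\alpha})^{-|\beta|}$ and $(1+t^{\alpha})^{|\beta|}$; factors of the form $(1+t^{\alpha})^{-|\beta|}$ or $(1-t^{\alpha})^{|\beta|}$ do not by themselves define probability measures. The paper sidesteps this by grouping the $k$-th factor as a single unit, e.g.\ $H^{*}_{2,k}(t)=\bigl((1+t^{2k-1})/(1-t^{2k-1})\bigr)^{16}$, which \emph{does} have nonnegative coefficients, and attaches one random variable $X_{t,k}$ to each such block before checking \eqref{Lcondi}. You will need the same grouping; once done, the fourth--moment bound and the Euler--Maclaurin estimates \eqref{mP}--\eqref{lR} carry everything through, and the constants from $H_2$ and $H_3$ each contribute $1/(2\sqrt 2)$, summing to the claimed $1/\sqrt 2$.
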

For positive integers $m$ and $a$ satisfying $1\le a\le m$, we define 
\begin{align*}
P_{m, a}(t):=\prod_{n=0}^{\infty}(1-t^{mn+a})^{-1}
\end{align*}
and also set
$$
Q(t):=\prod_{n=1}^{\infty}(1+t^{2n-1}),\quad R(t):=\prod_{n=1}^{\infty}(1+t^{2n}).
$$
Hereafter, the probability measure for the functions that appear will be defined as in \eqref{sokudo}.
In preparation for the proof, we also calculate $\log P_{m, a}(e^{-\lambda})$, $\log Q(e^{-\lambda})$, and $\log R(e^{-\lambda})$ for some $\lambda>0$.
Here and in the following, we denote $\rho:=-\log t$, and consider the estimates in the cases $\rho\to0$ and $\lambda\to0$.

Regarding the function $P_{m, a}(t)$, each quantity has already been described in~\cite{IM} as follows.
\begin{align}\label{mP}
m_{P_{m, a}}(t)&=\frac{\pi^2}{6m\rho^2}+\mathcal{O}\left(\frac{1}{\rho}\right),\\\label{oP}
\sigma_{P_{m, a}}^2(t)&=\frac{\pi^2}{3m\rho^3}+\mathcal{O}\left(\frac{1}{\rho^2}\right),\\\label{lP}
\log P_{m,a}(e^{-\lambda})&=\frac{\pi^2}{6m\lambda}+\log \frac{\Gamma\left(\frac{a}{m}\right)}{\sqrt{2\pi}}+\left(\frac{a}{m}-\frac{1}{2}\right)\log m\lambda+\mathcal{O}(\lambda).
\end{align}
For the functions $Q(t)$ and $R(t)$, we likewise evaluate the mean and the variance using the Euler--Maclaurin summation formula.

We first compute the mean of $Q(t)$ as follows.
\begin{align}\nonumber
m_Q(t)
&=t\frac{d}{dt}\log Q(t)=t\frac{d}{dt}\sum_{n=1}^{\infty}\log (1+t^{2n-1})=\sum_{n=1}^{\infty}\frac{(2n-1)e^{-(2n-1)\rho}}{1+e^{-(2n-1)\rho}}\\\nonumber
&=\frac{e^{-\rho}}{2(1+e^{-\rho})}+\int_{1}^{\infty}\frac{(2x-1)e^{-(2x-1)\rho}}{1+e^{-(2x-1)\rho}}dx\\\nonumber
&\qquad+\int_{1}^{\infty}\left(x-\lfloor x\rfloor-\frac{1}{2}\right)\frac{d}{dx}\left(\frac{(2x-1)e^{-(2x-1)\rho}}{1+e^{-(2x-1)\rho}}\right)dx\\\nonumber
&=\int_{1}^{\infty}\frac{(2x-1)e^{-(2x-1)\rho}}{1+e^{-(2x-1)\rho}}dx\\\label{mqeq}
&\qquad+\int_{1}^{\infty}\left(x-\lfloor x\rfloor-\frac{1}{2}\right)\frac{d}{dx}\left(\frac{(2x-1)e^{-(2x-1)\rho}}{1+e^{-(2x-1)\rho}}\right)dx+\mathcal{O}(1).
\end{align}
For the first term, we have
\begin{align*}
\int_{1}^{\infty}\frac{2x-1}{e^{(2x-1)\rho}+1}dx
&=\frac{1}{2\rho^2}\int_{\rho}^{\infty}\frac{x}{e^x+1}dx=\frac{1}{2\rho^2}\left(\int_{0}^{\infty}-\int_{0}^{\rho}\right)\frac{x}{e^x+1}dx\\
&=\frac{1}{8\rho^2}\int_{0}^{\infty}\left(\frac{xe^{\frac{x}{2}}}{e^x-1}-\frac{x}{e^x-1}\right)dx-\frac{1}{2\rho^2}\int_{0}^{\rho}\frac{x}{e^x+1}dx\\
&=\frac{1}{8\rho^2}\left(\zeta\left(2, \frac{1}{2}\right)-\zeta(2)\right)+\mathcal{O}(1)\\
&=\frac{\pi^2}{24\rho^2}+\mathcal{O}(1).
\end{align*}
Here, we use the integral representation
$$
\zeta(s, a)=\frac{1}{\Gamma(s)}\int_{0}^{\infty}\frac{t^{s-1}e^{(1-a)t}}{e^t-1}dt
$$
of the Hurwitz zeta function 
$$
\zeta(s,a):=\sum_{n=0}^{\infty}\frac{1}{(n+a)^s}\qquad(\Re(s)>1, 0<a\le 1)
$$ 
and the values $\zeta(2)=\pi^2/6$ and $\zeta(2, 1/2)=(2^2-1)\zeta(2)=\pi^2/2$. Next, for the second term of \eqref{mqeq}, we have
\begin{align*}
&\left|\int_{1}^{\infty}\left(x-\lfloor x\rfloor-\frac{1}{2}\right)\frac{d}{dx}\left(\frac{2x-1}{e^{(2x-1)\rho}+1}\right)dx\right|\\
&\le \int_{1}^{\infty}\left|\frac{d}{dx}\left(\frac{2x-1}{e^{(2x-1)\rho}+1}\right)\right|dx\\
&=\int_{1}^{\infty}\left|\frac{2}{e^{(2x-1)\rho}+1}-\frac{2\rho(2x-1)e^{(2x-1)\rho}}{(e^{(2x-1)\rho}+1)^2}\right|dx\\
&\le\int_{1}^{\infty}\left(\left|\frac{2}{e^{(2x-1)\rho}+1}\right|+\left|\frac{2\rho(2x-1)e^{(2x-1)\rho}}{(e^{(2x-1)\rho}+1)^2}\right|\right)dx \\
&=\frac{\log (1+e^{-\rho})}{\rho}+\frac{1}{2\rho}\left(\int_{0}^{\infty}-\int_{0}^{\rho}\right)\frac{2xe^{x}}{(e^{x}+1)^2}dx\\
&\le  \frac{\log (1+e^{-\rho})}{\rho}+\frac{\log 2}{\rho}+\frac{1}{2\rho}\rho\frac{2\rho e^{\rho}}{(e^{\rho}+1)^2}\le \frac{e^{-\rho}}{\rho}+\frac{\log 2}{\rho}+\frac{e^{\rho}}{\rho}.
\end{align*}
Here we have used 
\begin{align*}
\int_{0}^{\infty}\frac{xe^{x}}{(e^{x}+1)^2}dx=\left[\frac{x}{e^x+1}\right]_{0}^{\infty}+\int_{0}^{\infty}\frac{dx}{e^x+1}=\log2
\end{align*}
and the fact that the function $xe^{x}/(e^x+1)^2$ is increasing in $[0, \rho]$ for small $\rho$.
Hence, we obtain the following estimate for $m_Q(t)$.
\begin{align}\label{mQ}
m_Q(t)=\frac{\pi^2}{24\rho^2}+\mathcal{O}\left(\frac{1}{\rho}\right)\qquad(\rho\to 0).
\end{align}
Applying the same method, we obtain the following estimate for the variance of $Q(t)$.
\begin{align}\nonumber
\sigma^2_Q(t)
&=t\frac{d}{dt}m_Q(t)=\sum_{n=1}^{\infty}\frac{(2n-1)^2e^{(2n-1)\rho}}{(e^{(2n-1)\rho}+1)^2}\\\nonumber
&=\frac{e^{\rho}}{2(e^{\rho}+1)^2}+\int_{1}^{\infty}\frac{(2x-1)^2e^{(2x-1)\rho}}{(e^{(2x-1)\rho}+1)^2}dx\\\nonumber
&\qquad+\int_{1}^{\infty}\left(x-\lfloor x\rfloor-\frac{1}{2}\right)\frac{d}{dx}\left(\frac{(2x-1)^2e^{(2x-1)\rho}}{(e^{(2x-1)\rho}+1)^2}\right)dx\\\nonumber
&=\int_{1}^{\infty}\frac{(2x-1)^2e^{(2x-1)\rho}}{(e^{(2x-1)\rho}+1)^2}dx\\\nonumber
&\qquad+\int_{1}^{\infty}\left(x-\lfloor x\rfloor-\frac{1}{2}\right)\frac{d}{dx}\left(\frac{(2x-1)^2e^{(2x-1)\rho}}{(e^{(2x-1)\rho}+1)^2}\right)dx+\mathcal{O}\left(\frac{1}{\rho^2}\right).\\\nonumber
\end{align}
For the first term, we have
\begin{align*}
\int_{1}^{\infty}\frac{(2x-1)^2e^{(2x-1)\rho}}{(e^{(2x-1)\rho}+1)^2}dx
&=\frac{1}{2\rho^3}\left(\int_{0}^{\infty}-\int_{0}^{\rho}\right)x^2\frac{d}{dx}\left(-\frac{1}{e^x+1}\right)dx\\
&=\frac{1}{\rho^3}\int_{0}^{\infty}\frac{x}{e^x+1}dx+\mathcal{O}\left(\frac{1}{\rho}\right)\\
&=\frac{\pi^2}{12\rho^3}+\mathcal{O}\left(\frac{1}{\rho}\right).
\end{align*}
For the second term, we obtain the following by applying the same method as in the evaluation of the second term of \eqref{mqeq}.
\begin{align*}
&\left|\int_{1}^{\infty}\left(x-\lfloor x\rfloor-\frac{1}{2}\right)\frac{d}{dx}\left(\frac{(2x-1)^2e^{-(2x-1)\rho}}{(1+e^{-(2x-1)\rho})^2}\right)dx\right|\\
&=\int_{1}^{\infty}\left|\frac{4(2x-1)e^{(2x-1)\rho}}{(e^{(2x-1)\rho}+1)^2}+\frac{2\rho(2x-1)^2e^{(2x-1)\rho}(1-e^{(2x-1)\rho})}{(e^{(2x-1)\rho}+1)^3}\right|dx\\
&\le \int_{1}^{\infty}\frac{4(2x-1)e^{(2x-1)\rho}}{(e^{(2x-1)\rho}+1)^2}+\frac{2\rho(2x-1)^2e^{(2x-1)\rho}}{(2x-1)\rho(e^{(2x-1)\rho}+1)^2}+\frac{2\rho(2x-1)^2e^{2(2x-1)\rho}}{(e^{(2x-1)\rho}+1)^3}dx\\
&=\int_{1}^{\infty}\frac{6(2x-1)e^{(2x-1)\rho}}{(e^{(2x-1)\rho}+1)^2}dx+\frac{1}{\rho^2}\left(\int_{0}^{\infty}-\int_{0}^{\rho}\right)\frac{x^2e^{2x}}{(e^x+1)^3}dx\\
&\le \frac{6}{\rho}\left(\frac{\log 2}{2\rho}+\frac{e^{\rho}}{2\rho}\right)+\frac{1}{\rho^2}\left(\frac{\pi^2}{12}+\log 2\right)+\frac{1}{\rho}\frac{\rho^2 e^{2\rho}}{(e^\rho+1)^3}\\
&\le \frac{6}{\rho}\left(\frac{\log 2}{2\rho}+\frac{e^{\rho}}{2\rho}\right)+\frac{1}{\rho^2}\left(\frac{\pi^2}{12}+\log 2\right)+\frac{e^{2\rho}}{\rho^2}.
\end{align*}
Therefore, we obtain the following.
\begin{align}\label{oQ}
\sigma_{Q}^2(t)=\frac{\pi^2}{12\rho^3}+\mathcal{O}\left(\frac{1}{\rho^2}\right)\qquad(\rho\to 0).
\end{align}
Furthermore, by applying the Euler--Maclaurin summation formula, we obtain 
\begin{align}\nonumber
\log Q(e^{-\lambda})
&=\frac{\log(1+e^{-\lambda})}{2}+\int_{1}^{\infty}\log (1+e^{-(2x-1)\lambda})dx\\\nonumber
&\qquad\qquad-\int_{1}^{\infty}\frac{2\lambda}{e^{(2x-1)\lambda}+1}\left(x-\lfloor x\rfloor-\frac{1}{2}\right)dx\\\nonumber
&=\log (1+e^{-\lambda})+\lambda\int_{0}^{\infty}\frac{2x-1}{e^{(2x-1)\lambda}+1}dx\\\nonumber
&\qquad-\int_{0}^{1}\log (1+e^{-(2x-1)\lambda})dx-\int_{1}^{\infty}\frac{2\lambda}{e^{(2x-1)\lambda}+1}\left(x-\lfloor x\rfloor-\frac{1}{2}\right)dx\\\nonumber
&=\frac{\pi^2}{24\lambda}-\frac{1}{2\lambda}\int_{-\lambda}^{\lambda}\frac{x}{e^{x}+1}dx-\int_{1}^{\infty}\frac{2\lambda}{e^{(2x-1)\lambda}+1}\left(x-\lfloor x\rfloor-\frac{1}{2}\right)dx\\\label{lQ}
&=\frac{\pi^2}{24\lambda}+\mathcal{O}(\lambda)+\mathcal{O}(\lambda e^{-\lambda})=\frac{\pi^2}{24\lambda}+\mathcal{O}(\lambda)\qquad(\lambda\to 0).
\end{align}
Similarly to the case of $Q(t)$, we obtain the following for $R(t)$ by applying the Euler--Maclaurin formula and the integral representation of the Riemann zeta function.
\begin{align}\label{mR}
m_{R}(t)&=\frac{\pi^2}{24\rho^2}+\mathcal{O}\left(\frac{1}{\rho}\right),\\\label{oR}
\sigma_{R}^2(t)&=\frac{\pi^2}{12\rho^3}+\mathcal{O}\left(\frac{1}{\rho^2}\right),\\\label{lR}
\log R(e^{-\lambda})&=\frac{\pi^2}{24\lambda}-\frac{1}{2}\log2+\mathcal{O}(\lambda).
\end{align}
Using these computational results, we analyze the asymptotic behavior of the coefficients in the $q$-expansions of the functions $H_1(\tau)$, $H_2(\tau)$, and $H_3(\tau)$.\\

\noindent\textbf{Evaluation of the coefficient of} \bm{$H_1$} \textbf{.}

First, we consider an alternative representation of the function $H_1(\tau)$. Combining \eqref{theta1} and \eqref{theta2} yields 
\begin{align*}
H_1(\tau)
&=2^7\frac{(\theta_0(\tau)^8+(\theta_0(\tau)^4+\theta_2(\tau)^4)^2)\theta_2(\tau)^4}{\theta_2(\tau)^{12}}\\&=2^7\frac{\theta_2(\tau)^{12}+2\theta_0(\tau)^4\theta_2(\tau)^4(\theta_0(\tau)^4+\theta_2(\tau)^4)}{\theta_2(\tau)^{12}}\\
&=2^7\left(1+2\frac{(2\eta(\tau)^3)^4}{\theta_2(\tau)^{12}}\right)=2^7+\frac{1}{q}\prod_{n=1}^{\infty}(1+q^n)^{-24}.
\end{align*}

Next, we define the order of power series described below.
\begin{definition}\label{zyunzyo}
For given two power series
$$
\sum_{n=0}^{\infty}s_nz^n\qquad\text{and}\qquad\sum_{n=0}^{\infty}t_nz^n,
$$
we denote by
$$
\sum_{n=0}^{\infty}s_nz^n\prec\sum_{n=0}^{\infty}t_nz^n
$$
if there exists an integer $N$ such that $s_n\le t_n$ for all $n\ge N$.
\end{definition}

Since
$$
\frac{1}{q}\prod_{n=1}^{\infty}(1+q^{n})^{-24}\prec\frac{1}{q}\prod_{n=1}^{\infty}(1-q^{n})^{-24},
$$
we aim to apply Lemma~\ref{1} to 
$$
H_1^{*}(t):=\prod_{n=1}^{\infty}(1-t^{n})^{-24}=\sum_{n=0}^{\infty}a^{*}_{n}t^n.
$$
This allows us to evaluate the asymptotic formula of the coefficient in the $q$-expansion of $H_1(\tau)$. Since it has been shown that $P_{m, a}(t)$ satisfies the strong Gaussian condition in \cite[Theorem~3.2]{IM}, the function $H_1^{*}(t)$ also satisfies the strong Gaussian condition.
From \eqref{mP} and \eqref{oP}, the mean $m_{H_1^{*}}(t)$ and the variance $\sigma^2_{H_1^{*}}(t)$ of $H_1^{*}(t)$ are as follows:
\begin{align*}
m_{H_1^{*}}(t)
&=t\frac{d}{dt}\log H_1^{*}(t)=24m_{P_{1, 1}}(t)=24\left(\frac{\pi^2}{6\rho^2}+\mathcal{O}\left(\frac{1}{\rho}\right)\right)\\
&=\frac{4\pi^2}{\rho^2}+\mathcal{O}\left(\frac{1}{\rho}\right),\\
\sigma^2_{H_1^{*}}(t)
&=t\frac{d}{dt}m_{H_1^{*}}(t)=24\sigma^2_{P_{1, 1}}(t)=\frac{8\pi^2}{\rho^3}+\mathcal{O}\left(\frac{1}{\rho^2}\right).
\end{align*}

Let 
\begin{align*}
\widetilde{m}_{H_1^{*}}(t):=\frac{4\pi^2}{\rho^2}\qquad\text{and}\qquad\widetilde{\sigma}_{H_1^{*}}(t):=\sqrt{\frac{8\pi^2}{\rho^3}}.
\end{align*}
These functions satisfy conditions $\widetilde{m}_{H_1^{*}}(t)\to\infty$, $\widetilde{\sigma}_{H_1^{*}}(t)/\sigma_{H_1^{*}}(t)\to1$ as $t\to 1$, and
\begin{align*}
\epsilon(t):=\frac{\widetilde{m}_{H_1^{*}}(t)-m_{H_1^{*}}(t)}{\sigma_{H_1^{*}}(t)}\to 0\quad(t\to 1)
\end{align*}
in Lemma~\ref{1}. From equation \eqref{lP}, we obtain
\begin{align*}
\log H_1^{*}(e^{-\lambda})
&=\frac{4\pi^2}{\lambda}+12\log\frac{\lambda}{2\pi}+\mathcal{O}(\lambda),
\end{align*}
and from Lemma~\ref{1}, we conclude
\begin{align}\label{A'}
h^{*}_{1, n}\sim\frac{e^{4\pi\sqrt{n}}}{\sqrt{2}n^{\frac{27}{4}}}.
\end{align}

\noindent\textbf{Evaluation of the coefficient of} \bm{$H_2$} \textbf{.}

The function $H_2$ can be reweitten as
\begin{align}\label{B'}
H_2(\tau)&=2^7\prod_{n=1}^{\infty}\left(\frac{1+(-q^{1/2})^{2n-1}}{1-(-q^{1/2})^{2n-1}}\right)^{16}+2^7\prod_{n=1}^{\infty}\left(\frac{1+(q^{1/2})^{2n-1}}{1-(q^{1/2})^{2n-1}}\right)^{16}\\\nonumber
&=(128 - 4096q^{\frac{1}{2}} + 65536 q - 704512 q^{\frac{3}{2}} + 5767168 q^2\cdots)\\\nonumber
&\qquad+(128 + 4096q^{\frac{1}{2}} + 65536 q + 704512 q^{\frac{3}{2}} + 5767168 q^2\cdots),
\end{align}
where we examine the coefficients of the first and second terms when they are expressed as $q$-series. Since the first term is an alternating series in $q^{1/2}$ and the second is a series with positive coefficients, it suffices to apply Lemma~\ref{1} to 
$$
H_2^{*}(t):=2^7\prod_{n=1}^{\infty}\left(\frac{1+t^{2n-1}}{1-t^{2n-1}}\right)^{16}=\sum_{n=0}^{\infty}h^{*}_{2, n}t^n.
$$
From \eqref{mP}, \eqref{oP}, \eqref{mQ}, and \eqref{oQ}, the mean $m_{H_2^{*}}(t)$ and the variance $\sigma^2_{H_2^{*}}(t)$ of the function $H_2^{*}(t)$ are computed as follows:
\begin{align}\label{mB'}
m_{H_2^{*}}(t)
&=t\frac{d}{dt}\log H_2^{*}(t)=16(m_{Q}(t)+m_{P_{2, 1}}(t))=\frac{2\pi^2}{\rho^2}+\mathcal{O}\left(\frac{1}{\rho}\right),\\\label{oB'}
\sigma^2_{H_2^{*}}(t)
&=t\frac{d}{dt}m_{H_2^{*}}(t)=16(\sigma^2_{Q}(t)+\sigma^2_{P_{2, 1}}(t))=\frac{4\pi^2}{\rho^3}+\mathcal{O}\left(\frac{1}{\rho^2}\right).
\end{align}
By using \eqref{mB'} and \eqref{oB'}, we prove the following theorem.
\begin{theorem}\label{SG}
The function $H_2^{*}(t)$ satisfies the strong Gaussian condition.
\end{theorem}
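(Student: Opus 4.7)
The plan is to mirror the argument used for $P_{m,a}(t)$ in \cite[Theorem~3.2]{IM}, adapted to the factorization
$$
H_2^*(t) = 2^7 \prod_{n=1}^{\infty} g_n(t), \qquad g_n(t) := \left(\frac{1+t^{2n-1}}{1-t^{2n-1}}\right)^{16}.
$$
This product structure makes the random variable $X_{H_2^*}(t)$ a sum of independent, $(2n-1)\mathbb{Z}_{\ge 0}$-valued random variables $X^{(n)}(t)$, one per factor. Writing $g_n(t) = (1-t^{2n-1})^{-16}(1+t^{2n-1})^{16}$, each $X^{(n)}(t)$ further decomposes as $(2n-1)$ times the sum of a negative-binomial-type and a binomial-type variable, so its mean $m_n(t)$, variance $\sigma_n^2(t)$ and third absolute central moment can be computed in closed form from \eqref{kitaichi} applied to $\log g_n$.

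The first step is to invoke Theorem~\ref{LCLT} with $\delta = 1$ for the centred triangular array $X^{(k)}(t) - m_k(t)$, indexed by $k$ with the outer parameter driven by $t \to 1$ (equivalently $\rho \to 0$). By \eqref{oB'} we have $s(t)^2 = \sigma^2_{H_2^*}(t) \asymp \rho^{-3}$, so the Lyapunov condition reduces to showing $\sum_k E\bigl[|X^{(k)}(t) - m_k(t)|^3\bigr] = o(\rho^{-9/2})$. Only the indices $k \lesssim 1/\rho$ contribute substantially, since $X^{(k)}(t)$ becomes essentially degenerate once $(2k-1)\rho \gg 1$; summing the resulting per-factor moments gives $O(\rho^{-4})$, which is the required bound. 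This yields pointwise convergence $E[e^{i\theta Z(t)}] \to e^{-\theta^2/2}$ for every fixed $\theta \in \mathbb{R}$.

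The main obstacle is upgrading this pointwise convergence to $L^1$-convergence on the growing interval $[-\pi\sigma(t), \pi\sigma(t)]$ demanded by the strong Gaussian condition. Here I would construct an integrable majorant for $|E[e^{i\theta Z(t)}]|$ from the product formula
$$
|E[e^{i\theta Z(t)}]| = \prod_{n=1}^{\infty} \left| \frac{g_n(e^{i\theta/\sigma(t)} t)}{g_n(t)} \right|,
$$
exploiting the elementary inequality $1 - \cos\phi \ge 2\phi^2/\pi^2$ for $|\phi| \le \pi$. This produces Gaussian-type decay of the modulus on a window $|\theta| \le \varepsilon\sigma(t)$ and genuine exponential decay on the complement, paralleling the estimates carried out for $P_{m,a}$ in \cite[Section~3]{IM}. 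The extra oscillating numerator $(1+e^{i\phi}t^{2n-1})^{16}$ is controlled by $|1+e^{i\phi}t^{2n-1}|^{16} \le (1+t^{2n-1})^{16}$ with a comparable trigonometric deficit, so it does not spoil the bound. Combining the majorant with the pointwise convergence and applying the dominated convergence theorem then delivers the strong Gaussian condition.
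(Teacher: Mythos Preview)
Your approach is essentially the same as the paper's: factor $H_2^*(t)$ into the independent pieces $g_n(t)$ and verify the Lyapunov condition \eqref{Lcondi} for the centred array, then invoke Theorem~\ref{LCLT}. The only cosmetic differences are that the paper takes $\delta=2$ (fourth moments, bounded by $C_2(2k-1)^4 t^{2k-1}/(1-t^{2k-1})^4$ and summed via Euler--Maclaurin to $O(\rho^{-5})$) rather than your $\delta=1$, and that the paper leaves the $L^1$-upgrade to the majorant argument of \cite[Theorem~3.2]{IM} without writing it out, whereas you sketch it explicitly.
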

\begin{proof}
Let
$$
H^{*}_{2, k}(t):=\left(\frac{1+t^{2k-1}}{1-t^{2k-1}}\right)^{16}=\sum_{n=0}^{\infty}h^{*}_{2, n, k}t^n,
$$
and let $X_{t, k}$ be a random variable such that 
$$
P[X_{t, k}=n]=\frac{h^{*}_{2, n, k}t^n}{H^{*}_{2, k}(t)}.
$$
We denote the mean of $H^{*}_{2, k}(t)$ by $m_{k}(t)$ and the variance by $\sigma^2_{2, k}(t)$, and define $Y_{t, k}:=X_{t, k}-m_k(t)$. Then, we have $E[Y_{t, k}]=0$. To complete the proof, we compute $E[Y^4_{t, k}]$ to confirm that the condition \eqref{Lcondi} holds. For $n\ge 1$, since 
$$
E[X^n_{t, k}]=\frac{1}{H^{*}_{2, k}(t)}\left(t\frac{d}{dt}\right)^nH^{*}_{2, k}(t)
$$
is satisfied, 
$$
E[X_{t, k}^n]\le C_1\frac{(2k-1)^nt^{2k-1}}{(1-t^{2k-1})^n}\qquad(\exists C_1>0)
$$
holds, and thus we obtain
\begin{align*}
E[Y^4_{t, k}]
&=E[X^4_{t, k}]+m^4_{k}(t)-4m_{k}(t)E[X^3_{t, k}]-4m^3_{k}(t)E[X_{t, k}]+6m^2_{k}(t)E[X^2_{t, k}]\\
&\le C_2\frac{(2k-1)^4e^{-(2k-1)\rho}}{(1-e^{-(2k-1)\rho})^4}\qquad(\exists C_2>0).
\end{align*}
By applying the Euler--Maclaurin summation formula again, we obtain 
$$
\sum_{k=1}^{\infty}\frac{(2k-1)^4e^{-(2k-1)\rho}}{(1-e^{-(2k-1)\rho})^4}=-\frac{2\zeta(2)+6\zeta(3)+4\zeta(4)}{\rho^5}+\mathcal{O}\left(\frac{1}{\rho^4}\right).
$$
Thus,
$$
\lim_{t\to 1}\frac{1}{(\sigma^2_{H^{*}_{2}}(t))^2}\sum_{k=1}^{\infty}E[Y_{t, k}^4]\le\lim_{t\to 1}\left(\frac{\rho^3}{4\pi^2}\right)^2\frac{2\zeta(2)+6\zeta(3)+4\zeta(4)}{\rho^5}=0
$$
holds, and applying Theorem~\ref{LCLT} allows us to conclude that $H^{*}_{2}(t)$ satisfies the strong Gaussian condition. 
\end{proof}

We now define 
\begin{align*}
\widetilde{m}_{H^{*}_{2}}(t):=\frac{2\pi^2}{\rho^2}\qquad\text{and}\qquad\widetilde{\sigma}_{H^{*}_{2}}(t):=\sqrt{\frac{4\pi^2}{\rho^3}}.
\end{align*}
Since
\begin{align*}
\log H^{*}_{2}(e^{-\lambda})
&=\log 2^7+16\log P_{2, 1}(e^{-\lambda})+16\log Q(e^{-\lambda})\\
&=\log \frac{1}{2}+\frac{2\pi^2}{\lambda}+\mathcal{O}(\lambda),
\end{align*}
holds from \eqref{lP} and \eqref{lQ}, we obtain 
$$
h^{*}_{2, n}\sim\frac{e^{2\pi\sqrt{2n}}}{2(2n)^{\frac{3}{4}}}
$$
by Lemma~\ref{1}. Thus, by noting equtation \eqref{B'}, we conclude
\begin{align}
h_{2, n}\sim(1+(-1)^{2n})\frac{e^{4\pi\sqrt{n}}}{2(4n)^{\frac{3}{4}}}=\frac{e^{4\pi\sqrt{n}}}{2\sqrt{2}n^{\frac{3}{4}}}.
\end{align}


\noindent\textbf{Evaluation of the coefficient of} \bm{$H_3$} \textbf{.}
The function $H_3(\tau)$, as in the case of $H_2(\tau)$, has a product representation given by
\begin{align}\label{C'}
H_3(\tau)
&=2^{15}q\prod_{n=1}^{\infty}\left(\frac{1+(-q^{1/2})^{2n}}{1-(-q^{1/2})^{2n-1}}\right)^{16}+2^{15}q\prod_{n=1}^{\infty}\left(\frac{1+(q^{1/2})^{2n}}{1-(q^{1/2})^{2n-1}}\right)^{16}\\\nonumber
&=(32768 q - 524288 q^{\frac{3}{2}} + 4980736 q^2 - 35651584 q^{\frac{5}{2}} + 
 211156992 q^3-\cdots)\\\nonumber
&\qquad+(32768 q + 524288 q^{\frac{3}{2}} + 4980736 q^2 + 35651584 q^{\frac{5}{2}} + 211156992 q^3+\cdots),
\end{align}
Hence, we apply the same method used for the function $H_2(\tau)$ to compute the function $H_3(\tau)$ as well. We define the function $H_3^{*}(t)$ as 
$$
H_3^{*}(t):=2^{15}t^2\prod_{n=1}^{\infty}\left(\frac{1+t^{2n}}{1-t^{2n-1}}\right)^{16}=\sum_{n=0}^{\infty}h^{*}_{3,n}t^n
$$
and obtain the asymptotic behavior of $h_{3, n}$ by analyzing the asymptotic behavior of its coefficients. From \eqref{mP}, \eqref{oP}, \eqref{mR}, and \eqref{oR}, the mean $m_{H^{*}_3}(t)$ and the variance $\sigma^2_{H^{*}_3}(t)$ of the function $H^{*}_{3}(t)$ are calculated as follows:
\begin{align}
m_{H^{*}_{3}}(t)&=2+16(m_{R}(t)+m_{P_{2, 1}}(t))=2+\frac{2\pi^2}{\rho^2}+\mathcal{O}\left(\frac{1}{\rho}\right),\\\label{H3o}
\sigma^2_{H^{*}_{3}}(t)&=16(\sigma^2_{R}(t)+\sigma^2_{P_{2, 1}}(t))=\frac{4\pi^2}{\rho^3}+\mathcal{O}\left(\frac{1}{\rho^2}\right).
\end{align}
Using the product expansion of $H^{*}_{3}(t)$ as in Theorem~\ref{SG}, we define a probability measure for each 
$$
H^{*}_{3, k}(t):=\left(\frac{1+t^{2k}}{1-t^{2k-1}}\right)^{16}.
$$
Noting
$$
\sum_{k=1}^{\infty}\frac{k^4e^{-2k\rho}}{(1-e^{-(2k-1)\rho})^4}\le e^{-\rho}\sum_{k=1}^{\infty}\frac{(2k-1)^4e^{-(2k-1)\rho}}{(1-e^{-(2k-1)\rho})^4}
$$
and equation \eqref{H3o}, we can then verify by direct calculation that $H^{*}_{3}(t)$ also satisfies the condition \eqref{Lcondi}, and thus satisfies the strong Gaussian condition.
Thus, we define 
\begin{align*}
\widetilde{m}_{H^{*}_{3}}(t):=\frac{2\pi^2}{\rho^2}\qquad\text{and}\qquad\widetilde{\sigma}_{H^{*}_{3}}(t):=\sqrt{\frac{4\pi^2}{\rho^3}}.
\end{align*}
Then, since 
\begin{align*}
\log H^{*}_{3}(e^{-\lambda})
&=\log 2^{15}+16\log R(e^{-\lambda})+16\log P_{2, 1}(e^{-\lambda}) +\mathcal{O}(\lambda)\\
&=\log \frac{1}{2}+\frac{2\pi^2}{\lambda}+\mathcal{O}(\lambda).
\end{align*}
from \eqref{lP} and \eqref{lR}, we obtain 
$$
h^{*}_{3, n}\sim\frac{e^{2\pi\sqrt{2n}}}{2(2n)^{\frac{3}{4}}}
$$
by Lemma~\ref{1}. Hence, by equation \eqref{C'}, we conclude that 
\begin{align}\label{h3}
h_{3, n}\sim(1+(-1)^{2n})\frac{e^{4\pi\sqrt{n}}}{2(4n)^{\frac{3}{4}}}=\frac{e^{4\pi\sqrt{n}}}{2\sqrt{2}n^{\frac{3}{4}}}.
\end{align}

Therefore, from \eqref{A'}, \eqref{B'}, and \eqref{C'}, we see that the main terms come from the functions $H_2(\tau)$ and $H_3(\tau)$, which yields
$$
c_n\sim\frac{e^{4\pi\sqrt{n}}}{2\sqrt{2}n^{\frac{3}{4}}}+\frac{e^{4\pi\sqrt{n}}}{2\sqrt{2}n^{\frac{3}{4}}}=\frac{e^{4\pi\sqrt{n}}}{\sqrt{2}n^{\frac{3}{4}}}
$$
and completes the proof of Theorem~\ref{main}.
\section{Another proof via Hauptmodul}

As shown in Lemma~\ref{theta}, the $j$-function can be expressed using theta functions. 
But there are many other expressions of the $j$-function as rational functions of another modular functions.
In this section, we explain that the asymptotic formula for the Fourier coefficients of the $j$-function can also be obtained via such different expressions by the same method as in \cref{se3}.

For a positive integer $N$, let 
$$
\Gamma_{0}(N)=\left\{
\begin{pmatrix}
a & b \\
c & d \\
\end{pmatrix}
\in\Gamma\ ; c\equiv0\ (\text{mod}\ N)
\right\}
$$
be the Hecke congruence subgroup of level $N$. When the genus of $\Gamma_0(N)$ is zero, we call a generator of the modular function field of $\Gamma_{0}(N)$ having a simple pole at $q=0$ with residue $1$ and holomorphic in $\mathfrak{H}$ a Hauptmodul, and denote it by $j_N$. Examples of Hauptmodul are described in \cite[Table~3]{CN}, and they are represented by eta-quotients. Furthermore,  the $j$-function can be expressed as a rational function in terms of each Hauptmodul, as detailed in \cite[Chapter~5--Section~2]{F}. Based on \cite[Table~3]{CN} and \cite[Chapter~5--Section~2]{F}, we lists in \cref{Table1} only those Hauptmoduln that can be expressed as simplest eta-quotients.

\renewcommand{\arraystretch}{1.3}
\begin{table}[H]
\begin{center}
            \begin{tabular}{|c|c|c|c|} \hline
               $N$ &        Hauptmodul $j_N$    &                 Algebraic equation            \\ \hline
               $2$ &        $j_2=\eta(\tau)^{24}/\eta(2\tau)^{24}$       &                     $j(\tau)=(j_2+256)^3/j_2^2$                     \\ \hline
               $3$  &        $j_3=\eta(\tau)^{12}/\eta(3\tau)^{12}$       &                     $j(\tau)=(j_3+27)(j_3+243)^3/j_3^3$                       \\ \hline
               $4$  &        $j_4=\eta(\tau)^{8}/\eta(4\tau)^{8}$       &               $j(\tau)=(j_4^2+256j_4+4096)^3/j^4_4(j_4+16)$              \\ \hline
               $5$  &      $j_5=\eta(\tau)^{6}/\eta(5\tau)^{6}$      & $j(\tau)=(j_5^2+250j_5+3125)^3/j_5^5$   \\ \hline
               $7$  & $j_7=\eta(\tau)^{4}/\eta(7\tau)^{4}$ &    $j(\tau)=(j_7^2+13j_7+49)(j_7^2+245j_7+2401)^3/j_7^7$\\ \hline
               $9$  &        $j_9=\eta(\tau)^{3}/\eta(9\tau)^{3}$          &   \begin{tabular}{c}
                                            $j(\tau)=(j_9+9)^3(j_9^3+243j_9^2\phantom{XXXXXXX}$\\
                                                $\phantom{XXXXX}+2187j_9+6561)^3/j_9^9(j_9^2+9j_9+27)$
                                         \end{tabular}                \\ \hline
               $13$  &        $j_{13}=\eta(\tau)^{2}/\eta(13\tau)^{2}$    &   \begin{tabular}{c}
                                            $j(\tau)=(j_{13}^2+5j_{13}+13)(j_{13}^4+247j_{13}^3\phantom{XXXXXXXX}$\\
                                                $\phantom{XXXXXX}+3380j_{13}^2+15379j_{13}+28561)^3/j_{13}^{13}$
                                         \end{tabular}                  \\ \hline
               $25$  &        $j_{25}=\eta(\tau)/\eta(25\tau)$       &       \begin{tabular}{c}
                                            $j(\tau)=(j_{25}^{10}+250j_{25}^{9}+4375j_{25}^{8}+35000j_{25}^7\phantom{XXXXXXX}$\\
                                            $ + 178125j_{25}^6+ 631250j_{25}^5+ 1640625j_{25}^4\phantom{XXX}$\\
                                            $ + 3125000j_{25}^3 + 4296875j_{25}^2+ 3906250j_{25} \phantom{XX}  $\\
                                                $\phantom{XX}+ 1953125)^3/j_{25}^{25}(j_{25}^4+5j_{25}^3+15j_{25}^{2}+25j_{25}+25)$
                                         \end{tabular}                  \\ \hline
            \end{tabular}
            \caption{Hauptmodul $j_N$ for $\Gamma_0(N)$ and algebraic equations of $j$-functions.}
            \label{Table1}
\end{center}
\end{table}
Referring to \cref{Table1}, we deduce the main term by the same method used in \cref{se3}, and obtain the asymptotic formula for the Fourier coefficients of the $j$-function again.
Since we must confirm that the strong Gaussian condition is satisfied in each case, we prove it for a more general functions $g(t)$, including the Hauptmoduln discussed in this paper.
\begin{theorem}
For any positive integers $a$, $b$ with $a|b$ and $c$, we define
$$
g(t):=\prod_{k=1}^{\infty}\left(\frac{1-t^{bk}}{1-t^{ak}}\right)^{c}.
$$
Then the function $g(t)$ satisfies the strong Gaussian condition.
\end{theorem}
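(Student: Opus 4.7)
The plan is to mimic the proof of Theorem~\ref{SG}, decomposing $g(t)$ into simple independent factors and verifying the Lyapunov condition \eqref{Lcondi} of Theorem~\ref{LCLT} with $\delta=2$. The key initial observation is that, writing $b = am$ with $m \ge 2$ an integer and cancelling common terms in the two infinite products, one has
\[
g(t) = \left(\frac{\prod_{k\ge 1}(1-t^{ak})^{-1}}{\prod_{k\ge 1}(1-t^{bk})^{-1}}\right)^{c} = \prod_{l\in S}(1-t^l)^{-c}, \qquad S := \{aj : j \ge 1,\ m\nmid j\}.
\]
In particular $g(t)$ has non-negative coefficients and splits as a product over $l\in S$ of the simple factors $(1-t^l)^{-c}$, an analogue of the factorization of $H_2^{*}(t)$ used in Theorem~\ref{SG}.

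For each $l \in S$, let $V_{t,l}$ be the random variable attached to $(1-t^l)^{-c}$ via \eqref{sokudo}; then $X_t = \sum_{l \in S} V_{t,l}$ is a sum of independent variables, and $V_{t,l} = l(G_{t,l,1} + \cdots + G_{t,l,c})$, where the $G_{t,l,j}$ are i.i.d.\ geometric variables with $P[G_{t,l,j} = k] = (1-t^l)t^{lk}$ for $k \ge 0$. Standard factorial-moment identities for the geometric distribution give $E[G_{t,l,j}] = t^l/(1-t^l)$, $\mathrm{Var}(G_{t,l,j}) = t^l/(1-t^l)^2$, and $E[(G_{t,l,j}-E[G_{t,l,j}])^4] \le C\, t^l/(1-t^l)^4$. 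Combining these via the standard 4th central moment identity for sums of i.i.d.\ variables,
\[
E\bigl[(V_{t,l} - m_l(t))^4\bigr] \le C(c) \cdot \frac{l^4\, t^l}{(1 - t^l)^4}.
\]

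The mean and variance of $g(t)$ itself follow from $\log g(t) = c(\log P_{a,a}(t) - \log P_{b,b}(t))$ together with \eqref{mP} and \eqref{oP}, giving $\sigma_g^2(t) = c\pi^2(b-a)/(3ab\rho^3) + \mathcal{O}(\rho^{-2})$. To verify \eqref{Lcondi} it therefore suffices to bound $\sum_{l \in S} l^4 t^l/(1-t^l)^4$, and the Euler--Maclaurin technique used throughout Section~\ref{se3}, combined with the substitution $u = al\rho$, shows this sum is asymptotic to a constant times $\rho^{-5}\int_0^\infty u^4 e^{-u}/(1-e^{-u})^4\,du = \mathcal{O}(\rho^{-5})$. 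Dividing by $\sigma_g^4(t) \sim C'\rho^{-6}$ produces a ratio of order $\rho \to 0$, yielding Lyapunov's condition.

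The main technical obstacle is the 4th-moment estimate: while the regime $l\rho \lesssim 1$ contributes the expected $\sim\rho^{-4}$ per term, a naive estimate in the regime $l\rho \gg 1$ would give the divergent sum $\sum_l l^4$. The factor $t^l = e^{-l\rho}$ in the numerator of the 4th-moment bound --- reflecting that $G_{t,l,j}$ concentrates at $0$ when $t^l$ is small --- is precisely what provides the exponential cutoff near $l \sim 1/\rho$ and allows the Euler--Maclaurin estimate to close. Once this bound is secured, the remainder of the argument is an almost verbatim adaptation of the proof of Theorem~\ref{SG}.
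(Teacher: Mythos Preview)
Your proof is correct and follows the same overall strategy as the paper---verify Lyapunov's condition \eqref{Lcondi} with $\delta=2$ and invoke Theorem~\ref{LCLT}---but the factorization you use is genuinely different. The paper keeps the factors grouped as
\[
g_k(t)=\left(\frac{1-t^{bk}}{1-t^{ak}}\right)^{c}=\bigl(1+t^{ak}+\cdots+t^{(m-1)ak}\bigr)^{c},\qquad k\ge 1,
\]
so each $X_{t,k}$ is $ak$ times a sum of $c$ i.i.d.\ \emph{truncated} geometric variables, and then asserts the bound $E[Y_{t,k}^4]\le C\,k^4 t^{ak}/(1-t^{ak})^4$ directly. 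You instead cancel the common terms first to get $g(t)=\prod_{l\in S}(1-t^l)^{-c}$ and work with genuine geometric (negative binomial) factors, for which the fourth central moment $p(1+7p+p^2)/(1-p)^4$ is textbook. Both routes feed into the same Euler--Maclaurin estimate $\sum l^4 t^{l}/(1-t^{l})^4=\mathcal{O}(\rho^{-5})$ and the same variance asymptotic $\sigma_g^2(t)\sim c(b-a)\pi^2/(3ab\rho^3)$, so the Lyapunov ratio is $\mathcal{O}(\rho)$ in either case. Your decomposition buys a cleaner moment computation (no need to analyse the quotient factor), at the minor cost of summing over the lacunary set $S$ rather than over all $k\ge 1$; bounding that sum by the full sum over $a\mathbb{Z}_{>0}$ (which you implicitly do) handles this. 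One small slip: your substitution should read $u=l\rho$, not $u=al\rho$, since $l$ already ranges over multiples of $a$.
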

\begin{proof}
Let
$$
g_{k}(t):=\left(\frac{1-t^{bk}}{1-t^{ak}}\right)^{c}=\sum_{n=0}^{\infty}g_{k, n}t^n.
$$
Note that all $g_{k, n}\ge 0$ because of the condition $a|b$.
Let $X_{t, k}$ be a random variable such that
$$
P[X_{t, k}=n]=\frac{g_{k, n}t^n}{g_{k}(t)}.
$$
Again, we denote the mean of $g_{k}(t)$ by $m_{k}(t)$ and the variance by $\sigma^{2}_{k}(t)$, and define $Y_{t, k}:=X_{t, k}-m_{k}(t)$. Then, we obtain
\begin{align*}
E[Y^{4}_{t, k}]\le C\frac{k^4 t^{ak}}{(1-t^{ak})^4}
\end{align*}
for some $C>0$. Since
\begin{align*}
\sum_{k=1}^{\infty}\frac{k^4 e^{-ak\rho}}{(1-e^{-ak\rho})^4}
=-\frac{4(\zeta(2)+3\zeta(3)+2\zeta(4))}{(a\rho)^5}+\mathcal{O}\left(\frac{1}{\rho^4}\right)
\end{align*}
hold, and
\begin{align*}
\sigma^2_g(t)=c(\sigma^2_{P_{a,a}}(t)-\sigma^2_{P_{b,b}}(t))=\frac{c(b-a)\pi^2}{3ab\rho^3}+\mathcal{O}\left(\frac{1}{\rho^2}\right)
\end{align*}
is obtained from \eqref{oP}, we obtain the evaluation
\begin{align*}
\lim_{t\to 1}\frac{1}{(\sigma^2_g(t))^2}\sum_{k=1}^{\infty}E[Y_{t, k}^4]\le \lim_{t\to 1}\left(\frac{3ab\rho^3}{c(b-a)\pi^2}\right)^2\frac{4C(\zeta(2)+3\zeta(3)+2\zeta(4))}{(a\rho)^5}=0.
\end{align*}
\end{proof}
For example, we discuss the case $N=2$. According to \cref{Table1}, we see that
\begin{align*}
j_2(\tau)
&=\frac{\eta(\tau)^{24}}{\eta(2\tau)^{24}}=\frac{1}{q}\prod_{n=1}^{\infty}\frac{(1-q^n)^{24}}{(1-q^{2n})^{24}}=\frac{1}{q}\prod_{n=1}^{\infty}\frac{1}{(1+q^n)^{24}}\\
&=\frac{1}{q}- 24 + 276 q - 2048 q^2 + 11202 q^3 - 49152 q^4 + 184024 q^5 -\cdots\\
&:=\frac{1}{q}-24+\sum_{n=1}^{\infty}d_n^{(2)}q^n
\end{align*}
and
$$
j(\tau)=j_2(\tau)+3\cdot2^8+3\cdot2^{16}j_2(\tau)^{-1}+2^{24}j_2(\tau)^{-2}
$$
holds. Therefore, we consider the main term $j_2$ in the same way as $H_1$, and the terms $3\cdot2^{16}j_2^{-1}$ and $2^{24}j_2^{-2}$ similarly to $H_2$. By obtaining asymptotic formulas for the coefficients of each term, we identify the main term. 
First, for $j_2$, since 
$$
-\frac{1}{q}\prod_{n=1}^{\infty}\frac{(1-(-q)^n)^{24}}{(1-(-q)^{2n})^{24}}=-\frac{1}{q}\prod_{n=1}^{\infty}(1+q^{2n-1})^{24}\prec0,
$$
it suffices to focus on 
\begin{align*}
j_2^{*}(\tau)&:=\frac{1}{q}\prod_{n=1}^{\infty}(1+q^{2n-1})^{24}\\
&=\frac{1}{q}+ 24 + 276 q + 2048 q^2 + 11202 q^3 + 49152 q^4 + 184024 q^5 +\cdots
\end{align*}
when computing the Fourier coefficients of $j_2$.
From \eqref{mQ}, \eqref{oQ}, and \eqref{lQ}, since the mean and the variance of $j_2^{*}$ are 
$$
-1+\frac{\pi^2}{\rho^2}+\mathcal{O}\left(\frac{1}{\rho}\right)\qquad\text{and}\qquad \frac{2\pi^2}{\rho^3}+\mathcal{O}\left(\frac{1}{\rho^2}\right),
$$
respectively, and because of 
$$
\frac{\pi^2}{\lambda}+\mathcal{O}(\lambda),
$$
the Fourier coefficients of $j_2$ are asymptotically close to
\begin{align}\label{j2asy}
(-1)^{n-1}d_n^{(2)}\sim \frac{e^{2\pi\sqrt{n}}}{2n^{\frac{3}{4}}}.
\end{align}
Now, we need only to compute the Fourier coefficients of $j_2^{-1}$ and $j_2^{-2}$ using \eqref{mP}, \eqref{oP}, and \eqref{lP} given in \cref{sec2}. Since the mean and the variance of $j_2^{-1}$ are given by 
$$
1+\frac{2\pi^2}{\rho^2}+\mathcal{O}\left(\frac{1}{\rho}\right)\qquad\text{and}\qquad\frac{4\pi^2}{\rho^3}+\mathcal{O}\left(\frac{1}{\rho^2}\right),
$$
respectively, and noting 
$$
\log j_2^{-1}(e^{-\lambda})=\frac{2\pi^2}{\lambda}-12\log 2+\mathcal{O}(\lambda),
$$
we obtain that the  Fourier coefficients of $3\cdot2^{16}j_2^{-1}$ are asymptotically close to
\begin{align}\label{j2-1}
48\frac{e^{2\pi\sqrt{2n}}}{(2n)^{\frac{3}{4}}}.
\end{align}
Similarly, in the case of $j_2^{-2}$, the mean and the variance are given by 
$$
1+\frac{4\pi^2}{\rho^2}+\mathcal{O}\left(\frac{1}{\rho}\right)\qquad\text{and}\qquad\frac{8\pi^2}{\rho^3}+\mathcal{O}\left(\frac{1}{\rho^2}\right),
$$
rspectively, and 
$$
\log j_2^{-2}(e^{-\lambda})=\frac{4\pi^2}{\lambda}-24\log 2+\mathcal{O}(\lambda),
$$
holds as well. Therefore, we obtain the Fourier coefficients of $2^{24}j_2^{-2}$ are asymptotically close to 
\begin{align}\label{j2-2}
\frac{e^{4\pi\sqrt{n}}}{\sqrt{2}n^{\frac{3}{4}}}.
\end{align}
From \eqref{j2-1} and \eqref{j2-2}, we see that the Fourier coefficients of $2^{24} j_2^{-2}$ give the main term and we again obtain the asymptotic formula \eqref{jasy} for the Fourier coefficients of the $j$-function.

As in the $N=2$ case, we calculate the other cases listed in \cref{Table1} in a similar way and confirmed that the asymptotic formula for the Fourier coefficients of the $j$-function can also be obtained in all cases. The necessary computational results are summarized in \cref{Table2}. In the case where $N$ is prime, the main term takes the form of a constant multiple of $j_N^{-N}.$
\begin{example}\label{ex1}
Let us look at the case $N=3$. By the formula $t\,d/dt \log F(t)$ in \eqref{kitaichi}, the mean of $j_3^{-3}$ is three times that of $j_3^{-1}$. Therefore, the approximate value of the mean of $j_3^{-3}$ is $4\pi^2/\rho^2$ (see~\cref{Table2}). Similarly, the approximate value of the variance of $j_3^{-3}$ is $8\pi^2/\rho^3$. Moreover, since $\log j_3^{-3}(e^{-\lambda})=3\log j_3^{-1}(e^{-\lambda})$ the main term of $\log j_3^{-3}(e^{-\lambda})$ is given by $4\pi^2/\lambda-18\log 3$. Combining these results, we obtain that the Fourier coefficients of $3^{18}/j_3^{3}$ are  asymptotically close to 
$$
3^{18}\frac{e^{2\pi\sqrt{n}}}{3^{18}\sqrt{2\pi}\left(8\pi^2\left(\dfrac{n}{4\pi^2}\right)^{\frac{3}{2}}\right)^{\frac{1}{2}}e^{-2\pi\sqrt{n}}}=\frac{e^{4\pi\sqrt{n}}}{\sqrt{2}n^{\frac{3}{4}}}.
$$
\end{example}
On the other hand, when $N$ is not prime, a polynomial $\Phi(j_N)$ which is not a monomial appears in the denominator of the main term. However, each of these polynomials can also be expressed using eta-quotients, as shown in \cref{Table2}, and thus the asymptotic formulas for the $j$-function can be derived by the method described in this paper.

\begin{example}
Next consider the case $N=4$. The mean and the variance of $j_4^{-4}$ are approximately $4\pi^2/\rho^2$ and $8\pi^2/\rho^3$, respectively (four times of these of $j_4^{-1}$, which are given in \cref{Table2}).
On the other hand, the mean and the variance are asymptotically smaller than those of $j_4^{-4}$ as can be read off from \cref{Table2}. 
Therefore, the approximation of the mean of $1/j_4^4(j_4+16)$, which is the sum of these of $j_4^{-4}$ and $1/(j_4+16)$, is $4\pi^2/\rho^2$, and  
the approximation of the variance is given by $8\pi^2/\rho^3$.
Moreover, since the main term of $\log j_4^{-4}(e^{-\lambda})+\log \Phi_4(e^{-\lambda})$ is $4(\pi^2/\lambda-8\log 2)-4\log 2$, the asymptotic formula for the Fourier coefficients of $2^{36}/j_4^{4}(j_4+16)$ becomes 
$$
2^{36}\frac{e^{2\pi\sqrt{n}}}{2^{36}\sqrt{2\pi}\left(8\pi^2\left(\dfrac{n}{4\pi^2}\right)^{\frac{3}{2}}\right)^{\frac{1}{2}}e^{-2\pi\sqrt{n}}}=\frac{e^{4\pi\sqrt{n}}}{\sqrt{2}n^{\frac{3}{4}}}.
$$
\end{example}

In this way, by referring to \cref{Table2}, we can see that the asymptotic formula for the Fourier coefficients of the $j$-function can also be obtained using the alternative expressions of the $j$-function via all the Hauptmoduln in \cref{Table1}.

\begin{remark}
\begin{enumerate}
\renewcommand{\labelenumi}{(\theenumi)}
\item The constant multiple in the main term, which can be determined purely algebraically, is not at all a random number, 
because it should precisely cancel out with the constant that arises in the logarithmic computation. This is quite remarkable. 

\item We expect that, by using other Hauptmoduln not listed in \cref{Table1}, the Fourier coefficients of the $j$-function can also be obtained through a similar method.

\item Equation \eqref{j2asy} gives an asymptotic formula for the Fourier coefficients of the Hauptmodul $j_2$, which is shown in~\cite{MO} together with the case of levels $3$ and $5$. 

Furthermore, we can also consider the Fourier coefficients of the Hauptmodul $ j_4$. 
\begin{align*}
j_4(\tau)&:=\frac{\eta(\tau)^8}{\eta(4\tau)^8}=\frac{1}{q} - 8 + 20q - 62q^3 + 216q^5 - 641q^7 +1636q^9 -\cdots\\
&:=\frac{1}{q}-8+\sum_{n=1}^{\infty}d_n^{(4)}q^{2n-1}.
\end{align*}
We can show that the $n$th Fourier coefficients of the function
\begin{align*}
j_4^{*}(\tau)&:=16\frac{\eta(8\tau)^4}{\eta(2\tau)^4}-\frac{\eta(2\tau)^4}{\eta(8\tau)^4}\\
&=-\frac{1}{q} + 20q + 62q^3 + 216q^5 + 641q^7 +1636q^9+\cdots
\end{align*}
is equal to $|d_n^{(4)}|$. By applying the same method as in~\cref{se3}, we find that the main term of the Fourier coefficients of $j_4^{*}$ is $16\eta(8\tau)^4/\eta(2\tau)^4$. Therefore, by calculating the asymptotic formula for the Fourier coefficients of $16\eta(8\tau)^4/\eta(2\tau)^4$ using \eqref{mP}, \eqref{oP}, and \eqref{lP}, we obtain the following.
$$
(-1)^{n+1}d_n^{(4)}\sim\frac{e^{\pi\sqrt{2n}}}{2^{\frac{9}{4}}n^{\frac{3}{4}}}.
$$
For the Hauptmoduln listed in \cite[Table~3]{CN} other than $j_2$ and $j_4$, the author was unable to find a suitable expression by product expansions with positive (or alternating) coefficients, and thus the asymptotic formulas for their Fourier coefficients cannot be obtained so far using the method of this paper.

\end{enumerate}
\end{remark}
\clearpage
\renewcommand{\arraystretch}{1.6}
\begin{table}[H]
\begin{sideways}
            \begin{tabular}{|c|c|c|c|c|} \hline
               $N$ &        Main term    &          The mean of $j_N^{-1}$      &    The variance of $j_N^{-1}$      &      Main term of $\log j_N^{-1}(e^{-\lambda})$      \\ \hline
               $2$ &         $2^{24}/j_2^2$                &      $2\pi^2/\rho^2+\mathcal{O}(1/\rho)$                &        $4\pi^2/\rho^3+\mathcal{O}(1/\rho^2)$         &        $2\pi^2/\lambda-12\log 2$                 \\ \hline
               $3$  &        $3^{18}/j_{3}^3$       &          $4\pi^2/3\rho^2+\mathcal{O}(1/\rho)$         &      $8\pi^2/3\rho^3+\mathcal{O}(1/\rho^2)$    &     $4\pi^2/3\lambda-6\log 3$             \\ \hline
               $4$  &        $2^{36}/j_4^4(j_4+16)$       &      $\pi^2/\rho^2+\mathcal{O}(1/\rho)$             &     $2\pi^2/\rho^3+\mathcal{O}(1/\rho^2)$     &       $\pi^2/\lambda-8\log 2$        \\ \hline
               $5$  &        $5^{15}/j_5^5$      &        $4\pi^2/5\rho^2+\mathcal{O}(1/\rho)$        &  $8\pi^2/5\rho^3+\mathcal{O}(1/\rho^2)$     &         $4\pi^2/5\lambda-3\log 5$ \\ \hline
               $7$  &        $7^{14}/j_7^7$       &        $4\pi^2/7\rho^2+\mathcal{O}(1/\rho)$         &  $8\pi^2/7\rho^3+\mathcal{O}(1/\rho^2)$   &       $4\pi^2/7\lambda-2\log 7$  \\ \hline
               $9$  &        $3^{30}/j_9^9(j_9^2+9j_9+27)$          &       $4\pi^2/9\rho^2+\mathcal{O}(1/\rho)$      &  $8\pi^2/9\rho^3+\mathcal{O}(1/\rho^2)$      &        $4\pi^2/3\lambda^2-3\log 3$   \\ \hline
               $13$  &      $13^{13}/j_{13}^{13}$      &       $4\pi^2/13\rho^2+\mathcal{O}(1/\rho)$         &       $8\pi^2/13\rho^3+\mathcal{O}(1/\rho^2)$        &           $4\pi^2/13\lambda-\log 13$    \\ \hline
               $25$  &       \begin{tabular}{c}
                                      $5^{27}/j_{25}^{25}(j_{25}^4+5j_{25}^3\phantom{XXXX}$  \\      $\phantom{XX}+15j_{25}^{2}+25j_{25}+25)$    \end{tabular}        &   $4\pi^2/25\rho^2+\mathcal{O}(1/\rho)$    &   $8\pi^2/25\rho^3+\mathcal{O}(1/\rho^2)$     &          $4\pi^2/25\lambda-\log 5$        \\ \hline \hline
              $N$ &       \begin{tabular}{c}  Eta quotient of the polynomial $\Phi_{N}(j_N)$\\ appeared in the denominator \end{tabular}    &       
                                            The mean $m$ of $\Phi_N$       &         
                                       The variance $\sigma^2$ of $\Phi_N$      &      Main term of $\log \Phi_{N}(e^{-\lambda})$      \\ \hline
                                        $4$ &         $1/(j_4+16)=\eta(\tau)^8\eta(4\tau)^{16}/\eta(2\tau)^{24}$                &      $\mathcal{O}(1/\rho)$                &        $\mathcal{O}(1/\rho^2)$         &        $-4\log 2$                 \\ \hline
                      $9$  &        $1/(j_9^2+9j_9+27)=\eta(\tau)^{3}\eta(9\tau)^9/\eta(3\tau)^{12}$       &          $\mathcal{O}(1/\rho)$         &      $\mathcal{O}(1/\rho^2)$    &     $-3\log 3$             \\ \hline
                     $25$  &       \begin{tabular}{c}  $1/(j_{25}^4+5j_{25}^3+15j_{25}^{2}+25j_{25}+25)$\\$=\eta(\tau)\eta(25\tau)^5/\eta(5\tau)^{6}$   \end{tabular}    &      $\mathcal{O}(1/\rho)$          &     $\mathcal{O}(1/\rho^2)$     &       $-2\log 5$        \\ \hline

            \end{tabular}   
\end{sideways}
\caption{}
\label{Table2}
\end{table}

\clearpage

\section*{Acknowledgements}
The author is deeply grateful to Professor Ram Murty for introducing me to the new  research topic.
The author would also like to express their sincere gratitude to Professor Masanobu Kaneko for helpful comments regarding modular forms, and to Professor Tomoyuki Shirai for insightful advice on probability theory.
This work is supported by JSPS KAKENHI Grant Number 25KJ1953 and WISE program (MEXT) at Kyushu University.



\begin{thebibliography}{9}

\bibitem{D} L.~B\'aez-Duarte, Hardy--Ramanujan's asymptotic formula for partitions and the central limit theorem, Adv. Math., {\bf 125} (1) (1997), 114--120.

\bibitem{B} P.~Billingsley, Probability and Measure, anniversary edition, John Wiley \& Sons, Hoboken, NJ, (2012). 

\bibitem{Z}J.~H.~Bruinier, G.~van der Geer, G.~Harder and D.~Zagier, The 1-2-3 of Modular Forms, Universitext, Springer-Verlag, Belin--Heidelberg (2008).

\bibitem{CN} J.~H.~Conway and S.~P.~Norton, Monstrous Moonshine, Bull. London Math. Soc. {\bf 11} (1979), 308--339.

\bibitem{DM} M.~Dewar and M.~R.~Murty, An asymptotic formula for the coefficients of $j(z)$, Int. J. Number Theory, {\bf 9} (2013), 641--652. 

\bibitem{F} R.~Fricke, Lehrbuch der Algebra, Dritter Band, Druck und Friedr. Vieweg \& Sohn, (1924). 

\bibitem{IM} K.~Ikeda and R.~Murty, The distinct partition function via probability, to appear in the Proceedings of the Alladi Ramakrishnan Centenary Conference.

\bibitem{K} M.~Kaneko, Traces of singular moduli and the Fourier coefficients of the elliptic modular function $j(\tau)$. CRM Proc. Lect. Notes (Ottawa, ON, 1996) (Number Theory, 19). American Mathematical Society, Providence, RI, (1999), 173--176. 

\bibitem{MO}T.~Matsusaka and R.~Osanai, Arithmetic formulas for the Fourier coefficients of Hauptmoduln of level 2, 3, and 5, Proc. Amer. Math. Soc. 145, (4) (2017), 1383--1392.

\bibitem{MS} M.~R.~Murty and K.~Sampath, On the asymptotic formula for the Fourier coefficients of $j$-function, Kyushu J. Math. {\bf 70} (2016), no.1, 83--91.

\bibitem{P} H.~Petersson, \"{U}ber die Entwicklungskoeffizienten der automorphen Formen, Acta Math. {\bf 58} (1) (1932) 169--215.

\bibitem{R} H.~Rademacher, The Fourier coefficients of the modular invariant $j(\tau)$, Amer. J. Math. {\bf 60} (2) (1938) 501--512.

\end{thebibliography}
\end{document}